\documentclass[a4paper]{amsart}
\usepackage{xypic,amscd,amssymb,latexsym,srcltx}
\usepackage{amsthm}
\renewcommand{\proofname}{\textbf{Proof}}

\makeatletter
\renewenvironment{proof}[1][\proofname]{\par
  \pushQED{\qed}%
  \normalfont
  \topsep6\p@\@plus6\p@\relax
  \trivlist
  \item[\hskip\labelsep
        \bfseries #1\@addpunct{.}]\ignorespaces
}{%
  \popQED\endtrivlist\@endpefalse
}
\makeatother
\usepackage{amsmath}  
\usepackage{ulem}
\usepackage{tikz-cd} 
\usepackage{enumitem}
\def\@setcopyright{}
\makeatother

\newcommand{\Syl}{\operatorname{Syl}\nolimits}

\newcommand{\Aut}{\operatorname{Aut}\nolimits}

\newcommand{\GL}{\operatorname{GL}\nolimits}
\newcommand{\GU}{\operatorname{GU}\nolimits}

\newcommand{\SL}{\operatorname{SL}\nolimits}
\newcommand{\SU}{\operatorname{SU}\nolimits}
\newcommand{\GSp}{\operatorname{GSp}\nolimits}

\newtheorem{lemma}{Lemma}[section]
\newtheorem{proposition}[lemma]{Proposition}

\newtheorem{theorem}[lemma]{Theorem}
\newtheorem{remark}[lemma]{Remark}
\newtheorem{conjecture}[lemma]{Conjecture}
\newtheorem*{acknowledgments*}{ACKNOWLEDGMENTS}
\newtheorem{definition}[lemma]{Definition}

\newtheorem*{notation*}{Notation}

\newtheorem*{conj*}{Conjecture}
\newtheorem*{thm*}{Main Theorem}
\newtheorem*{remark*}{Remark}
\newtheorem*{lemma*}{Lemma}
\newtheorem*{example*}{[Example]}

\title{Block-exoticity of reduction simple fusion systems on $S(n,p)$}

\author{Jun Liao, Yikun Liu}

\address{School of Mathematics and Statistics, Hubei University, Wuhan 430062, P. R. China}

\email{jliao@hubu.edu.cn(J. Liao), ykliu3612@qq.com(Y. Liu)}

\begin{document}

\begin{abstract}
In this paper, we prove that for $p\geq 13$ and $5\leq n\leq p-3$, every reduction simple fusion system on the finite $p$-group $S(n,p)$ is block-exotic. Furthermore, we establish the block-exoticity of three infinite families of reduction simple exotic fusion systems.
\end{abstract}

\maketitle

%Keywords: Fusion systems; Blocks;  Exotic fusion systems
%\textbf{2000 Mathematics Subject Classification:} 20C20; 20C30; 20C33; 20C34

\section{Introduction}

Let $p$ be a prime and $S$ be a finite $p$-group. A fusion system on $S$ is a category whose objects are the subgroups of $S$ and whose morphisms are injective group homomorphisms satisfying certain axioms. A fusion system is saturated if it satisfies two additional axioms, namely the Sylow axiom and the extension axiom. Throughout this paper, all fusion systems are assumed to be saturated unless otherwise stated.

A fusion system $\mathcal{F}$ on $S$ is called realizable if there exists a finite group $G$ with $S \in \Syl_{p}(G)$ such that $\mathcal{F} = \mathcal{F}_{S}(G)$, where $\mathcal{F}_{S}(G)$ is the fusion system on $S$ whose morphisms are given by conjugation by elements of $G$; otherwise, $\mathcal{F}$ is called exotic. 

Fix an algebraically closed field $k$ of characteristic $p$. In block theory, a fusion system can also be constructed from a block $b$ of the group algebra $kG$. More precisely, given a finite group $G$, a block $b$ of $kG$, and a maximal $b$-Brauer pair $(S, e_S)$, one obtains a fusion system $ \mathcal{F}_{(S, e_S)}(G,b)$ on the defect group $S$, whose morphisms are given by conjugation by elements of $G$ on the $b$-Brauer pairs contained in $(S, e_S)$. A fusion system $ \mathcal{F}$ on $S$ is called block-realizable if $ \mathcal{F} =  \mathcal{F}_{(S, e_S)}(G,b)$, in this case, $b$ is called an $ \mathcal{F}$-block; otherwise, $ \mathcal{F}$ is called block-exotic.

Recall that Brauer's third main theorem (see \cite[Part IV, Theorem 5.9]{AKO}) states that:
if $G$ is a finite group and $b$ is the principal block of $kG$, then for any maximal $b$-Brauer pair $(S, e_{S})$, the subgroup $S$ is a Sylow $p$-subgroup of $G$. Moreover, the associated fusion system $\mathcal{F}_{(S,e_{S})}(G,b)$ coincides with the group fusion system $\mathcal{F}_{S}(G)$. In particular, if a fusion system $\mathcal{F}$ is realizable, then it is block-realizable. That is to say, if $\mathcal{F}$ is block-exotic, then $\mathcal{F}$ is exotic. However, the converse remains an open problem:

\begin{conjecture}\cite[Part IV, 7.1]{AKO}
Let $\mathcal{F}$  be a fusion system on a finite $p$-group $S$.
If $\mathcal{F}$ is exotic, then $\mathcal{F}$ is block-exotic.
\end{conjecture}

Several families of fusion systems supporting this conjecture have already been identified.
The first family consists of the Solomon systems, defined on a Sylow $2$-subgroup of $\operatorname{Spin}(7,q)$. These are conjectured to be the only exotic fusion systems on $2$-groups. Block-exoticity for $q = 3$ was proved by Kessar in \cite{Ke}, and generalized to all odd prime powers $q$ in \cite[Theorem~9.34]{Cr}. The second example consists of the Ruiz–Viruel systems, defined on the extra-special $7$-group of order $7^3$; block-exoticity for these systems was established by Kessar and Stancu in \cite{KS}. 
Serwene established further families of block-exotic fusion systems in \cite{Se}, \cite{Se3} and \cite{Se2}. Specifically, in \cite{Se} it is proved that each exotic fusion system $\mathcal{F}$ on a Sylow $p$-subgroup of $G_2(p)$ for an odd prime $p$ with $O_p(\mathcal{F}) = 1$ is block-exotic. In \cite{Se3}, the exotic fusion system described by Grazian on a subgroup of the Monster group is shown to be block-exotic, implying that exotic and block-exotic fusion systems coincide for all $p$-groups of sectional rank $3$ with $p \geq 5$. In \cite{Se2} it is proved that exotic and block-exotic fusion systems coincide for all fusion systems on exceptional $p$-groups of maximal nilpotency class with $p \geq 5$, as well as for two exotic fusion systems on 3-rank two $3$-groups of maximal nilpotency class discovered by Diaz, Ruiz and Viruel in \cite{DRV}.

The present paper aims to provide further evidence for Conjecture 1.1. Our main results concern reduction simple fusion systems, which are those whose underlying $p$-group has no nontrivial proper strongly closed subgroup.
\begin{theorem}
For $p \geq 13$ and $5 \leq n \leq p-3$, any reduction simple fusion system on $S(n,p)$ is block-exotic.
\end{theorem}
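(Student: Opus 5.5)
We plan to follow the strategy of Kessar–Stancu and Serwene: we assume $\mathcal{F}$ is block-realizable and derive a contradiction from the classification of finite simple groups. Let $G$ be a finite group and $b$ a block of $kG$ with maximal Brauer pair $(S,e_S)$, where $S=S(n,p)$ and $\mathcal{F}=\mathcal{F}_{(S,e_S)}(G,b)$. Choose $(G,b)$ with $|G:Z(G)|$ minimal. Reduction simple fusion systems on $S(n,p)$ in this range are known to be exotic, by the classification of fusion systems on $p$-groups of maximal class; we take that as given.

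\emph{Step 1: minimal counterexample reductions.} Using the standard reductions (Fong–Reynolds, Külshammer–Puig, and the reduction in \cite{KS} and \cite{Se}), we show the following.
\begin{enumerate}[label=(\roman*)]
\item $b$ is quasiprimitive: for every $N\trianglelefteq G$, $b$ covers a unique $G$-stable block of $kN$.
\item $O_p(G)\le Z(G)$, and $O_{p'}(G)\le Z(G)$.
\item For $N\trianglelefteq G$ covered by the block $c$, the defect group $S\cap N$ of $c$ is strongly closed in $\mathcal{F}$.
\end{enumerate}
Since $\mathcal{F}$ is reduction simple, (iii) forces $S\cap N\in\{1,S\}$ for each normal subgroup $N$. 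Also $Z(S)$ has order $p$ and is not normal in $\mathcal{F}$ (as $O_p(\mathcal{F})=1$), so $S\cap Z(G)=1$. Taking $N=F^*(G)$ and combining with the fact that $S$ is not abelian and that $\mathcal{F}$ is not nilpotent, we aim to show that $E(G)$ is a central product of components permuted transitively by $G$. The defect group of the covered block $c$ of $E(G)$ is then $S\cap E(G)=S$. Since $S(n,p)$ has maximal class and is not a direct product, there is a single component $L$, so $G/Z(G)$ is almost simple with $S\le L$. Moreover $|G:L Z(G)|$ is prime to $p$, and $\mathcal{F}$ contains $\mathcal{F}_{(S,e)}(L,c)$ as a subsystem of $p'$-index. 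By reduction simplicity and the structure of $\mathcal{F}$, that subsystem is again exotic, so we may essentially replace $G$ by the quasisimple group $L$.

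\emph{Step 2: excluding quasisimple groups via CFSG.} The main obstacle is to show that no block $c$ of a quasisimple group $L$ with $p\ge 13$ has defect group isomorphic to $S(n,p)$ for $5\le n\le p-3$, or at least none with an exotic fusion system. Recall that $S(n,p)$ has maximal class, order $p^n$ and exponent $p$ since $n<p$. We treat the families in turn.
\begin{itemize}
\item \emph{Sporadic and exceptional covering groups.} Their Sylow $p$-subgroups for $p\ge13$ are abelian or of order at most $p^3$. Since $n\ge5$, these are excluded.
\item \emph{Alternating groups.} Defect groups of blocks of $\tilde A_m$ are Sylow subgroups of symmetric groups $S_{pw}$. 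For $w<p$ these are $C_p\wr$-type groups, which are not of maximal class except for $C_p\wr C_p$ of order $p^{p+1}$. But $n\le p-3$, so this case is excluded.
\item \emph{Lie type in defining characteristic.} Blocks of positive defect have Sylow defect groups (Humphreys), and so they give realizable systems; these are handled by exoticity of $\mathcal{F}$.
\item \emph{Lie type in non-defining characteristic.} By the Broué–Malle–Michel and Cabanes–Enguehard description, defect groups are extensions of abelian groups (Sylow subgroups of tori) by Sylow subgroups of relative Weyl groups. With $p\ge 13$, such a group has maximal class only if it is abelian or of order at most $p^{p+1}$ of the shape $C_{p^a}\wr C_p$ or similar, and these have exponent $>p$ or are of order $p^{p+1}$. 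This contradicts $n\le p-3$ and exponent $p$.
\end{itemize}
The delicate part is making the last bullet rigorous, especially the $\ell$-adic structure for classical groups and the unipotent versus non-unipotent blocks (Bonnafé–Rouquier Morita reduction). We would first try to reduce, via Bonnafé–Dat–Rouquier, to unipotent blocks, and then use the known defect group structure there.

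Putting the steps together, no $(G,b)$ can realize $\mathcal{F}$, so $\mathcal{F}$ is block-exotic.
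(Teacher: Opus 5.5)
\textbf{There are genuine gaps, and at the boundary $n=p-3$ the cross-characteristic step cannot be closed at all.}

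\textbf{Gap 1: you assume exoticity.} Your proposal rests on an input you do not have: that every reduction simple fusion system on $S(n,p)$ in this range is exotic. No such result is available to cite. It is a consequence of the statement you are proving, since block-exotic implies exotic. Yet you use it essentially, in two places:
\begin{enumerate}
\item to dispose of groups of Lie type in defining characteristic;
\item in Step 1, to assert that the subsystem realized by the component $L$ ``is again exotic''.
\end{enumerate}
The second use is unjustified even granting the first. The reduction to a quasisimple group is Serwene's Theorem 3.1, which the paper simply cites instead of redoing your Step 1. It only produces \emph{some} system $\mathcal{F}_0$ on $S$ with $O_p(\mathcal{F}_0)=1$ realized by a block of $L$, with no relation to $\mathcal{F}$.

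This is why the paper works purely with defect groups and shows that $S(n,p)$ is never a defect group of a block of a quasisimple group. In defining characteristic it argues as follows. Nontrivial defect groups are Sylow (Lemma 3.16, via Kessar's $BN$-pair criterion). By Clelland, in a simple group of Lie type in characteristic $p$, an elementary abelian subgroup of maximal rank $>3$ has index $>p$; but $A(n,p)$ has rank $n+1\ge 6$ and index $p$. Your alternating and sporadic bullets essentially agree with the paper. Note that $|S(n,p)|=p^{n+2}$, not $p^n$.

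\textbf{Gap 2: the cross-characteristic claim is false, and the statement fails at $n=p-3$.} This bullet is not merely unfinished. Let $p\mid q-1$ with $p^2\nmid q-1$.
\begin{itemize}
\item A Sylow $p$-subgroup $S$ of $\mathrm{PSL}_p(q)$ is $A\rtimes\langle\bar P\rangle$. Here $A\cong\{v\in\mathbb{F}_p^p:\sum v_i=0\}/\mathbb{F}_p(1,\dots,1)$ comes from the diagonal torus, and $\bar P$ is the image of a $p$-cycle permutation matrix $P$.
\item $\bar P$ acts on $A$ as a single Jordan block of size $p-2$. So $S\cong S(p-3,p)$, of exponent $p$ and order $p^{p-1}$. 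Your estimate forgets the factor $p^2$ lost from $C_p\wr C_p\le\GL_p(q)$ on passing to $\mathrm{PSL}_p(q)$.
\item $\mathcal{F}_S(\mathrm{PSL}_p(q))$ is reduction simple. A nontrivial strongly closed $T$ is normal in $S$, so it meets $Z(S)\le A$. The monomial matrices induce $\mathfrak{S}_p$ on $A$, and this action is irreducible (it is the heart of the permutation module). Since $T\cap A$ is a nonzero $\mathfrak{S}_p$-submodule, $A\le T$. But $A$ is not strongly closed, because $P$ is $\SL_p(q)$-conjugate to $\operatorname{diag}(1,\zeta,\dots,\zeta^{p-1})$, whose image lies in $A$. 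Hence $T=S$.
\item By Brauer's third main theorem, the principal block realizes this system.
\end{itemize}
So for $n=p-3$ the statement itself fails, and no argument can close this step.

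For $n\le p-4$ the step can be done as in the paper. Use Serwene's Proposition 4.3 to pass to $\SL_m(q^k)\le H\le\GL_m(q^k)$ or the unitary analogue. Then apply Fong--Srinivasan: non-abelian defect groups there have order $\ge p^{pa+1}$.

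Be aware that the paper's Proposition 3.14 has the same boundary defect. It uses $|D'/Z|=|D/Z(D)|$, whereas in this example $D'/Z\cong D$. With only $|D'/Z|\le|D|$, the chain of inequalities yields $(p-2)a\le n+1$. This is contradictory precisely when $n\le p-4$.
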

 
 As a consequence, we prove that three infinite families of reduction simple exotic fusion systems are block-exotic. In \cite{Cl}, Clelland constructed an infinite family of fusion systems $\mathcal{E}(n,p)$ on the $p$-group $S(n,p)$, where $p$ is an odd prime and $2 \leq n \leq p-1$. In \cite{CP}, Clelland and Parker constructed two infinite families of fusion systems $\mathcal{F}(r,n,k,X)$ for $X \in \{R, Q\}$, where $k$ is an arbitrary finite field.

\begin{theorem}
For $p \geq 13$ and $5 \leq n \leq p-3$, the fusion system $\mathcal{E}(n,p)$ is block-exotic.
\end{theorem}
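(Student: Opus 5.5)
This statement should follow from Theorem~1.2 once we know that $\mathcal{E}(n,p)$ is a reduction simple fusion system on $S(n,p)$ in the range $p\geq 13$, $5\leq n\leq p-3$. The plan therefore has two steps: establish reduction simplicity, then apply Theorem~1.2.

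\textbf{Step 1: reduction simplicity.} Recall Clelland's construction in \cite{Cl}. For $p$ odd and $2\leq n\leq p-1$, the system $\mathcal{E}(n,p)$ lives on $S(n,p)$. This group is a Sylow $p$-subgroup of $\GL_{2}(\mathbb{F}_p)$-type extensions, namely $V_n \rtimes C_p$, where $V_n$ is the $n$-dimensional symmetric power module. Clelland studies its essential subgroups, and I expect her work to show that for $n$ in our range $\mathcal{E}(n,p)$ is saturated, exotic, and satisfies $O_p(\mathcal{E}(n,p))=1$, $O^p(\mathcal{E}(n,p))=\mathcal{E}(n,p)$ and $O^{p'}(\mathcal{E}(n,p))=\mathcal{E}(n,p)$. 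I would quote these facts.

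Reduction simplicity in the sense used here means that $S(n,p)$ has no nontrivial proper strongly closed subgroup. I would verify this directly as follows.

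\begin{enumerate}
\item Let $T$ be a nontrivial subgroup of $S=S(n,p)$ that is strongly $\mathcal{E}(n,p)$-closed. Then $T$ is normal in $S$, so $T\cap Z(S)\neq 1$.
\item The centre $Z(S)$ has order $p$ and is contained in an essential subgroup $E$. The automizer $\mathrm{Aut}_{\mathcal{E}}(E)$ contains $\SL_2(p)$, which acts irreducibly on a chief factor of $E$ containing $Z(S)$.
\item Strong closure of $T$ then forces $T$ to contain the whole irreducible module generated by $Z(S)$ under this action. In particular $T\geq V_n$, or at least $T$ contains the relevant essential abelian subgroup.
\item Using a second class of essential subgroups, which are not contained in $V_n$, the same argument forces $T$ to contain elements outside $V_n$. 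Hence $T=S$.
\end{enumerate}

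\textbf{Step 2: apply Theorem~1.2.} Once $\mathcal{E}(n,p)$ is known to be reduction simple on $S(n,p)$, Theorem~1.2 gives block-exoticity immediately, since the hypotheses on $p$ and $n$ coincide.

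\textbf{Main obstacle.} The main difficulty is Step~1. One has to confirm, from Clelland's list of essential subgroups and their automizers, that the $\mathcal{E}$-conjugates of elements outside $V_n$ really generate $S$. Equivalently, the focal subgroup, together with strong closure, must exclude $V_n$ itself being strongly closed. I would first check whether Clelland already records $O^p(\mathcal{E})=\mathcal{E}$ and $O_p(\mathcal{E})=1$, since these two facts quickly exclude proper strongly closed subgroups of the relevant shapes. Only if that is not recorded would I compute the essential automizers explicitly.
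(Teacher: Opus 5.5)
Your proposal is correct and follows the paper's proof. The paper also just combines reduction simplicity of $\mathcal{E}(n,p)$ (Lemma 2.1(2), quoted from Clelland's Lemma 6.1.3) with Theorem 1.2. Your optional direct check also goes through: $\SL_2(p)\leq\Aut_{\mathcal{E}}(A)$ acts irreducibly on $A=A(n,p)$, which forces $A\leq T$ (note $A(n,p)$ has dimension $n+1$, not $n$), and the automorphisms of Clelland's subgroup $\langle x^{n}\rangle\times\langle u\rangle$ (with $u$ the unipotent generator of $S$ over $A$), which are built into the definition of $\mathcal{E}(n,p)$, send $x^{n}\in Z(S)$ to $u\notin A$, giving $T=S$.
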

\begin{theorem}
For $p \geq 13$ and $5 \leq n \leq p-3$, the fusion systems $\mathcal{F}(r,n,\mathbb{F}_{p},R)$ and $\mathcal{F}(r,n,\mathbb{F}_{p},Q)$ are block-exotic.
\end{theorem}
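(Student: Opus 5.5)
The statement to prove is Theorem 1.3, which should follow from Theorem 1.2 once we know that these systems are reduction simple and live on $S(n,p)$ in the stated range.

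The plan is to reduce Theorem 1.3 to the main Theorem 1.2. First I will recall from \cite{CP} that $\mathcal{F}(r,n,k,X)$ is defined on the Sylow $p$-subgroup of a suitable extension involving $\SL_2(k)$ acting on a module of dimension $n+1$, say $\mathrm{Sym}^n(k^2)$. For $k=\mathbb{F}_p$, the underlying $p$-group is $k^{n+1}\rtimes C_p$, which is $S(n,p)$, the group of order $p^{n+1}$ and maximal class used in Clelland's construction. So for $k=\mathbb{F}_p$ both families $X\in\{R,Q\}$ are saturated fusion systems on $S(n,p)$.

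I will identify this carefully by comparing presentations. The point to check is that the parameter $n$ in $\mathcal{F}(r,n,k,X)$ matches the $n$ in $S(n,p)$. If the module has dimension $n$ rather than $n+1$, I will reindex accordingly.

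The second step is to verify reduction simplicity, namely that $S(n,p)$ has no nontrivial proper $\mathcal{F}$-strongly closed subgroup. I expect \cite{CP} to prove simplicity, or at least $O_p(\mathcal{F})=1$ and $O^{p}(\mathcal{F})=O^{p'}(\mathcal{F})=\mathcal{F}$, for these systems. Failing a direct citation, I will argue as follows.

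A strongly closed subgroup $T$ is normal in $S$. In a group of maximal class, the normal subgroups of index at least $p^2$ are exactly the terms $\gamma_i(S)$ of the lower central series, and there are only a few other normal subgroups, all maximal.

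The essential subgroups of $\mathcal{F}(r,n,\mathbb{F}_p,X)$ include the abelian subgroup $A=k^{n+1}$, together with further subgroups of type $R$ or $Q$. Using these:

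\begin{itemize}
\item $\operatorname{Aut}_{\mathcal{F}}(A)\supseteq \SL_2(p)$ acts irreducibly on $A$. Since $n\le p-1$, the module $\mathrm{Sym}^n$ is irreducible. Hence $T\cap A$ is either $1$ or $A$.
\item If $T\ge A$, then the morphisms on the $X$-type essential subgroups move elements outside $A$ into $T$, which forces $T=S$.
\item If $T\cap A=1$, then $T$ is normal in $S$ and meets $A$ trivially. This forces $|T|\le p$ and $T\le Z(S)\le A$, a contradiction unless $T=1$.
\end{itemize}

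Hence $\mathcal{F}$ is reduction simple. Exoticity is not needed for this argument.

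Finally, for $p\ge13$ and $5\le n\le p-3$, Theorem 1.2 applies directly and gives block-exoticity of both $\mathcal{F}(r,n,\mathbb{F}_p,R)$ and $\mathcal{F}(r,n,\mathbb{F}_p,Q)$.

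The main obstacle is the bookkeeping of the first step: matching the Clelland–Parker parametrization and the underlying group with $S(n,p)$ in exactly the range of Theorem 1.2. I will handle this by explicit comparison of orders and of the action of the distinguished element of order $p$ on $A$, as a single Jordan block of size $n+1$.
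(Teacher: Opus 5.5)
Your proposal follows the paper's route: the paper deduces the theorem directly from Theorem 1.2 together with Lemma 2.1(3), which cites Clelland--Parker [CP, Lemma 5.5] for the fact that $\mathcal{F}(r,n,\mathbb{F}_p,X)$ is a reduction simple fusion system on $S(n,p)$, so your self-contained reduction-simplicity argument is sound in outline but not needed. One slip to fix: $S(n,p)=A(n,p)\rtimes C_p$ with $\dim A(n,p)=n+1$ has order $p^{n+2}$, not $p^{n+1}$.
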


The paper is divided into four sections. In Section 2, we frist introduce the $p$-group $S(n,p)$, and then describe the fusion systems $\mathcal{E}(n,p)$ and $\mathcal{F}(r,n,\mathbb{F}_{p},X)$ for $X \in \{R, Q\}$, all of which are defined on $S(n,p)$. In Section 3, we gather some preliminary facts, some of which are possibly well known, in preparation for the proof of Theorem 1.2. In Section 4, we prove Theorem 1.2 using the classification of finite simple groups. As a consequence, we prove that for $p\geq 13$ and $5 \leq n \leq p-3$, the exotic fusion systems $\mathcal{E}(n,p)$ and $\mathcal{F}(r,n,\mathbb{F}_{p},X)$ for $X \in \{R, Q\}$ are block-exotic.

\section{ The systems $\mathcal{E}(n,p)$ and $\mathcal{F}(r,n,\mathbb{F}_{p},X)$}

We briefly introduce the systems $\mathcal{E}(n,p)$ and $\mathcal{F}(r,n,\mathbb{F}_{p},X)$ for $X \in \{R, Q\}$. All of these families are defined on the same $p$-group $S(n,p)$. In fact the $p$-groups $S(n,p)$ are Sylow $p$-subgroups of the semidirect products that arise naturally from considering the so-called basic irreducible  $\mathbb{F}_{p}\GL_{2}(p)$-modules as described by Brauer and Nesbitt in \cite{BN}.

Let $\mathbb{F}_{p}$ be the field with $p$ elements, where $p$ is an odd prime, and let $\mathbb{F}_{p}[x,y]$ be the polynomial ring in two indeterminates with coefficients in $\mathbb{F}_{p}$. The group $\GL_{2}(p)$ acts on $\mathbb{F}_{p}[x,y]$ by linear transformation: let $X = \left(\begin{smallmatrix}
\alpha & \beta \\
\gamma & \delta
\end{smallmatrix}\right) \in \GL_{2}(p)$, define an action of $X$  on $\mathbb{F}_{p}[x,y]$ by setting $x \mapsto (\alpha x + \beta y)$ and $y \mapsto (\gamma x + \delta y)$, and then extend linearly to all of $\mathbb{F}_{p}[x,y]$. For an integer $n$ with $2 \leq n \leq p-1$, denote by $A(n,p)$ the $n+1$-dimensional subspace of $\mathbb{F}_{p}[x,y]$ consisting of all homogeneous polynomials of degree $n$. The multiplicative group $\mathbb{F}^{*}_{p}$ acts on $A(n,p)$ by scalar multiplication: for $\lambda \in \mathbb{F}^{*}_{p}$ and $f \in A(n,p)$, define $f^{\lambda} = \lambda f$. Set $\Gamma = \GL_{2}(p) \times \mathbb{F}^{*}_{p}$. Then $A(n,p)$ becomes a $\mathbb{F}_{p} \Gamma$-module via $f^{(X,\lambda)} = \lambda(f^{X})$. Now form the semidirect product $\mathcal{G}(n,p) = A(n,p) \rtimes \Gamma$, where $A(n,p)$ is viewed as an elementary abelian $p$-group. It is easy to check that $|\mathcal{G}(n,p)|$ = $|A(n,p)||\GL_{2}(p)||\mathbb{F}^{*}_{p}| = p^{n+2}(p-1)^{3}(p+1)$. Define $S(n,p) = A(n,p) \rtimes \langle(\left(\begin{smallmatrix}
1 & 0 \\
1 & 1
\end{smallmatrix}\right),1) \rangle$.
Then $|S(n,p)| = p^{n+2}$ and $S(n,p)$ is a Sylow $p$-subgroup of $\mathcal{G}(n,p)$. 
For more details on the construction of $S(n,p)$, see \cite[Section 5.1]{Cl}.

{\bf The system $\mathcal{E}(n,p)$}. In \cite[after Proposition 5.2.4]{Cl}, Clelland constructed an infinite family of fusion systems $\mathcal{E}(n,p)$ on $p$-group $S(n,p)$, where $p$ is an odd prime and $2 \leq n \leq p-1$. Let $T = \langle x^{n}\rangle \rtimes \langle(\left(\begin{smallmatrix}
1 & 0 \\
1 & 1 
\end{smallmatrix}\right),1)  \rangle \leq S(n,p)$. Define $\mathcal{E}(n,p)$ to be the fusion system generated by $\mathcal{F}_{S}(\mathcal{G})$ and $\Aut(T)$, i.e., 
\[\mathcal{E}(n,p) = \langle \mathcal{F}_{S}(\mathcal{G}), \Aut(T)\rangle\] 
where $S=S(n,p)$ and $\mathcal{G}=\mathcal{G}(n,p)$.
For $p\geq 13$ and $5 \leq n \leq p-1$, Clelland proved that $\mathcal{E}(n,p)$ is exotic in  \cite[Chapter 6]{Cl} .

{\bf The systems $\mathcal{F}(r,n,\mathbb{F}_{p},X)$}. 
In \cite[Theorem 4.9]{CP}, Clelland and Parker constructed two infinite families of fusion systems $\mathcal{F}(r,n,k,X)$ for $X \in \{R, Q\}$, where $k$ is an arbitrary finite field. We only consider the special case $k = \mathbb{F}_{p}$.  In this case, both families are defined on the same $p$-group $S(n,p)$ introduced above. Define the following matrix groups:
\begin{center}
$P_{R} = \left\{ Y = \left(\begin{smallmatrix}
x_{1} & x_{2} & x_{3} \\
x_{4} & x_{5} & x_{6} \\
0 & 0 & x_{7} 
\end{smallmatrix}\right)| Y \in \GL_{3}(p) \right\}$
\end{center}
and\\
\begin{center}
$P_{Q} = \left \{ Y = \left(\begin{smallmatrix}
x_{1} & x_{2} & x_{3} & x_{4} \\
0 & x_{5} & x_{6} & x_{7} \\
0 & x_{8} & x_{9} & x_{10} \\
0 & 0 & 0 & x_{11} 
\end{smallmatrix}\right)| Y \in \GSp_{4}(p)\right\}$.
\end{center} 
Let $R$ and $Q$ be the $p$-subgroups of $S(n,p)$ defined in \cite{CP} (see the discussion preceding Lemma 4.4). For $X \in \{R,Q\}$, let $F_{X} = F(n,\mathbb{F}_{p},X) = \mathcal{G}(n,p)*_{N_{\mathcal{G}(n,p)}(X)}P_{X}$ be the free amalgamated product, where the specific monomorphisms $\psi_{R}$, $\psi_{Q}$ for the free amalgamated products are given in \cite{CP}. Set $L = N_{O^{p'}(P_{X})}(N_{S}(X))O^{p'}(\mathcal{G})$ and $L_{X} = (L \cap P_{X})O^{p'}(P_{X})$.
Define $F^{*}_{X} = F^{*}(n,\mathbb{F}_{p},X) = L*_{L \cap L_{X}}L_{X}$, which can be view as a subgroup of $F_{X}$. Let $r$ be a divisor of $p-1$. Denote by $F(r,n,\mathbb{F}_{p},X)$ the unique subgroup of $F_{X}$ containing $F^{*}_{X}$ and of index $r$. Finally, define the fusion system $\mathcal{F}(r,n,\mathbb{F}_{p},X) = \mathcal{F}_{S}(F(r,n,\mathbb{F}_{p},X))$. The parameters $r$ and $n$ are subject to the following conditions.
\begin{enumerate}
\item For $X = R$: $n \geq 2$, $p$ odd, and $r \mid (n+2, p-1)$.
\item For $X = Q$: $n \geq 3$, $p \geq 5$, and $r \mid (n, p-1)$.
\end{enumerate}
Under these conditions, \cite[Theorem 5.1-5.2]{CP} shows that the fusion systems $\mathcal{F}(r,n,\mathbb{F}_{p},X)$ are exotic, except possibly when $n = 2$ and $p = 3$ or 5 for $X = R$.

In the remainder of this paper, we restrict to the case $p \geq 13$ and $5 \leq n \leq p-3$. Under these hypothesis, all three families $\mathcal{E}(n,p)$, $\mathcal{F}(r,n,\mathbb{F}_{p},R)$, and $\mathcal{F}(r,n,\mathbb{F}_{p},Q)$ are exotic.

We conclude this section with a key property of these fusion systems. Recall that a fusion system $\mathcal{F}$ on a finite $p$-group $S$ is called reduction simple if $S$ contains no nontrivial proper strongly $\mathcal{F}$-closed subgroup.

\begin{lemma}
Let $S = S(n,p)$, then
\begin{enumerate}
\item $|Z(S)| = p$ and  $Z(S) \subseteq \left[S, S \right]$.
\item $\mathcal{E}(n,p)$ is a reduction simple fusion system on $S$.
\item $\mathcal{F}(r,n,\mathbb{F}_{p},X)$ is a reduction simple fusion system on $S$.
\end{enumerate}
\begin{proof}
This follows from \cite[Lemma 5.1.1]{Cl}, \cite[Lemma 6.1.3]{Cl}, and \\
\cite[Lemma 5.5]{CP}.
\end{proof}
\end{lemma}

\section{ Preliminaries}
Let $k$ be an algebraically closed field of characteristic $p$. 

\begin{notation*}
Let $G$ be a finite group and let $kG$ be its group algebra over $k$. A $p$-block of $G$ is a block of $kG$. For such a block $b$, we denote its defect group by $\delta(b)$. Let $\nu$ be the $p$-adic valuation normalized by $\nu(p)=1$; for a finite group $H$, set $\nu(H)=\nu(|H|)$. For a real number $x$, let $[x]$ denote the greatest integer not exceeding $x$.
\end{notation*}

We start with Serwene's reduction theorem, which reduces the problem to the quasisimple case.
\begin{theorem}
\cite[Theorem 3.5]{Se} Let $S$ be a non-abelian $p$-group such that $Z(S)$ is cyclic, and let $\mathcal{F}$ be a reduction simple fusion system on $S$. If $\mathcal{F}$ is block-realizable, then there exists a fusion system $\mathcal{F}_{0}$ on $S$ and a quasisimple group $L$ with an $\mathcal{F}_{0}$-block, where $\mathcal{O}_{p}(\mathcal{F}_{0}) = 1$. 
\end{theorem}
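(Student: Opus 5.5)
The plan is to take a block-realization of $\mathcal{F}$ with the group as small as possible, reduce it by the Fong and Külshammer--Puig reductions, and then isolate a single component whose block still has defect group $S$. Reduction simplicity of $\mathcal{F}$, together with the fact that $S$ is non-abelian with $Z(S)$ cyclic, will force this.

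\textbf{Minimal counterexample and normal $p$-subgroups.} Suppose $\mathcal{F}$ is block-realizable. Among all pairs $(G,b)$ where $b$ is a block of $kG$ with defect group $S$ and $\mathcal{F}_{(S,e_S)}(G,b)=\mathcal{F}$, choose one with $|G|$ minimal.

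The basic tool is the standard fact that if $N\trianglelefteq G$ and $b$ covers a $G$-stable block of $N$, then $S\cap N$ is a defect group of that block and is strongly $\mathcal{F}$-closed. By reduction simplicity, $S\cap N\in\{1,S\}$ for every normal subgroup $N$ of $G$.

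Applying this to $N=O_p(G)\le S$ gives $O_p(G)\in\{1,S\}$. If $O_p(G)=S$, then $S\trianglelefteq\mathcal{F}$. In that case $Z(S)$, being characteristic in $S$, is strongly $\mathcal{F}$-closed. Since $S$ is non-abelian, $Z(S)$ is nontrivial and proper, a contradiction. Hence $O_p(G)=1$.

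\textbf{Fong and Külshammer--Puig reductions.} The first Fong reduction (Fong--Reynolds) and minimality of $|G|$ show that $b$ covers a $G$-stable block of every normal subgroup, i.e.\ $b$ is quasiprimitive. The second Fong reduction, passing to a $p'$-central extension, lets us assume $O_{p'}(G)\le Z(G)$.

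Consequently $F^*(G)=Z(G)E(G)$ with $Z(G)$ a $p'$-group. Apply the dichotomy above to $N=E(G)$.

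Suppose first $S\cap E(G)=1$. Then the block of $F^*(G)$ covered by $b$ has defect zero. The Külshammer--Puig theorem then realizes $\mathcal{F}$ by a block of a central $p'$-extension of $G/F^*(G)$. I would show this group has smaller order, which contradicts minimality; this is where one must check that the order genuinely drops. Hence $S\le E(G)$, and $S$ is a defect group of the $G$-stable block $e$ of $E(G)$ covered by $b$.

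\textbf{Isolating one component.} Write $E(G)=L_1\cdots L_t$. The block $e$ corresponds to a tensor product of blocks $e_i$ of $L_i$, with defect groups $D_i$. Since the $L_i$ commute and $Z(E(G))$ is a $p'$-group, $S=D_1\times\cdots\times D_t$. Then
\[
Z(S)=Z(D_1)\times\cdots\times Z(D_t)
\]
is cyclic, and each nontrivial $D_i$ has nontrivial center. So exactly one $D_i$, say $D_1$, is nontrivial, and $S=D_1\le L_1$.

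Let $\mathcal{F}_0$ be the fusion system of $e_1$ on $S$; it is a fusion system of a block of the quasisimple group $L=L_1$. It remains to show $O_p(\mathcal{F}_0)=1$. Since $L_1$ is normalized by the stabilizer of $e_1$, and using quasiprimitivity, $\mathcal{F}_0$ is $\mathcal{F}$-invariant. Hence $O_p(\mathcal{F}_0)$ is strongly $\mathcal{F}$-closed. It cannot equal $S$, for then $Z(S)$ would again be a nontrivial proper strongly closed subgroup. So $O_p(\mathcal{F}_0)=1$.

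\textbf{Main obstacle.} I expect the hardest part to be the bookkeeping of the Külshammer--Puig step and the $\mathcal{F}$-invariance of $\mathcal{F}_0$. Concretely, one must verify that the fusion system is preserved under the central-extension replacements, so that the strong-closure arguments apply to $\mathcal{F}$ itself.
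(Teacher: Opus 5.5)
The paper gives no proof of this statement: it quotes it from Serwene, whose argument rests on the Kessar--Stancu reduction. Your outline has the same architecture. You take a minimal block-realization, apply the Fong and K\"{u}lshammer--Puig reductions to force $S\le E(G)$, use cyclicity of $Z(S)$ to eliminate all but one component, and use non-abelianness of $S$ to get $O_p(\mathcal{F}_0)=1$. The component step ($Z(S)=\prod Z(D_i)$ cyclic forces a single nontrivial $D_i$) and the final $O_p$ step are sound.

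\textbf{The gap: the minimality measure.} The real problem is the one you flagged, and it comes from minimizing $|G|$. Both the second Fong reduction and the K\"{u}lshammer--Puig step replace $G$ by a central $p'$-extension $\tilde G$ of $G/N$. Its kernel $Z$ has order determined by the $2$-cocycle class attached to the $G$-stable defect-zero block of $N$, not by $|N|$. Nothing in your argument gives $|Z|<|N|$, so $|\tilde G|<|G|$ is not available. Hence, with $|G|$ minimal, neither ``we may assume $O_{p'}(G)\le Z(G)$'' nor the contradiction in the case $S\cap E(G)=1$ follows.

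\textbf{The fix.} Minimize $|G:O_{p'}(Z(G))|$ instead. Each reduction then strictly lowers this measure:
\begin{enumerate}
\item Fong--Reynolds passes to a proper stabilizer $T$ containing $Z(G)$. Then $O_{p'}(Z(G))\le O_{p'}(Z(T))$, so the measure drops.
\item Each central-extension step yields $\tilde G$ with $Z\le O_{p'}(Z(\tilde G))$, so the new measure is at most $|G/N|$. This is strictly smaller than $|G:O_{p'}(Z(G))|$ whenever $N\not\le Z(G)$.
\item That condition holds for $N=O_{p'}(G)$ by assumption. It also holds for $N=F^*(G)=E(G)Z(G)$, because $E(G)\neq 1$: otherwise $G=C_G(F^*(G))\le Z(G)$ would be a $p'$-group, contradicting $S\neq 1$.
\end{enumerate}

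\textbf{A smaller point: normality of $L_1$.} Quasiprimitivity only concerns normal subgroups, and $L_1$ is a priori only subnormal. So your sentence about the stabilizer of $e_1$ does not yet give $L_1\trianglelefteq G$. Argue instead as follows. $G$ permutes the components, and since $e$ is $G$-stable, any $g$ with $L_1^g=L_j$ carries $e_1$ to the block $e_j$ of $L_j$ covered by $e$. As $D_j=1$ for $j\neq 1$, this forces $L_1\trianglelefteq G$, and then $e_1$ is $G$-stable. Now the standard result applies: the fusion system of a $G$-stable block of a normal subgroup is $\mathcal{F}$-invariant (weakly normal) in the fusion system of a covering block. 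The Frattini condition then makes $O_p(\mathcal{F}_0)$ normal in $\mathcal{F}$, and your $Z(S)$ argument finishes the proof.
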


The following results are useful for proving Theorem 1.2.

\begin{theorem} \cite[Chapter 5, Theorem 8.8]{NT}
Let $H$ be a normal $p'$-subgroup of a finite group $G$, and set $\overline{G} = G/H$. Denote by $\mu_{H}$ the natural $k$-algebra homomorphism 
\begin{center}
$\mu_{H}: kG \rightarrow k \overline{G} \qquad (\sum_{x} \alpha_{x}x \mapsto \sum_{x} \alpha_{x} \overline{x})$
\end{center}
if $ \mu_{H}(b) \neq 0$, where $b$ is a $p$-block of group $G$. Then $\mu_{H}(b)$ is precisely the $p$-block, say $\overline{b}$, of $\overline{G}$ and it holds that 
\begin{center}
\qquad $\delta(\overline{b}) = {}_G {\overline{\delta(b)}}$
\end{center}
i.e., the defect group $\delta(\overline{b})$ is $G$-conjugate to the quotient $\overline{\delta(b)} $ of the defect group $\delta(b)$.

\end{theorem}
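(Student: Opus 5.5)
The idea is to identify $k\overline{G}$ with a direct factor of $kG$ cut out by the idempotent attached to $H$, and then compare defect groups through their characterization by relative traces. Throughout I use the standard fact that a block idempotent $b$ lies in $(kG)^G_Q=\operatorname{Tr}_Q^G((kG)^Q)$ if and only if $Q$ contains a $G$-conjugate of $\delta(b)$.

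\textbf{Step 1: $\mu_H(b)$ is a block.} Since $H$ is a normal $p'$-subgroup, $|H|$ is invertible in $k$. Hence $e_H=|H|^{-1}\sum_{h\in H}h$ is a central idempotent of $kG$, and $kG=kGe_H\times kG(1-e_H)$.

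The map $\mu_H$ is a surjective algebra homomorphism with $\mu_H(e_H)=1$. It kills $kG(1-e_H)$, since $\mu_H(h)=1$ for $h\in H$ gives $\mu_H(1-e_H)=0$. Comparing dimensions, $\dim kGe_H=|G:H|$, so $\mu_H$ restricts to an algebra isomorphism $kGe_H\cong k\overline{G}$.

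Because $b$ is a primitive central idempotent, $be_H$ equals $b$ or $0$. Since $\mu_H(b)=\mu_H(be_H)$, the hypothesis $\mu_H(b)\neq0$ forces $b\in kGe_H$. Thus $b$ is a primitive central idempotent of $kGe_H$, and its image under the isomorphism is a primitive central idempotent of $k\overline{G}$, i.e. a block $\overline{b}$.

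\textbf{Step 2: $\delta(\overline b)\le_{\overline G}\overline{D}$.} Let $D=\delta(b)$ and write $b=\operatorname{Tr}_D^G(a)$ with $a\in(kG)^D$. Since $D\cap H=1$, each coset of $\overline D=DH/H$ in $\overline G$ is the image of exactly $|DH:D|=|H|$ cosets of $D$ in $G$. Therefore
\[
\overline b=\mu_H(b)=|H|\operatorname{Tr}_{\overline D}^{\overline G}(\mu_H(a)),
\]
with $\mu_H(a)\in(k\overline G)^{\overline D}$. As $|H|$ is invertible, $\overline b\in(k\overline G)^{\overline G}_{\overline D}$, so a defect group of $\overline b$ is $\overline G$-conjugate to a subgroup of $\overline D$.

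\textbf{Step 3: the reverse inequality of orders.} Let $\overline Q=\delta(\overline b)$ and write $\overline b=\operatorname{Tr}_{\overline Q}^{\overline G}(\overline c)$ with $\overline c$ fixed by $\overline Q$. Let $c\in kGe_H$ be the preimage of $\overline c$ under the isomorphism of Step 1. Let $Q$ be a Sylow $p$-subgroup of the full preimage of $\overline Q$, so that $QH/H=\overline Q$ and $|Q|=|\overline Q|$.

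Conjugation by $g\in G$ preserves $kGe_H$ and is compatible with the isomorphism, so $c$ is fixed by $QH$, hence by $Q$. Then $\operatorname{Tr}_Q^G(c)\in kGe_H$, and by the same coset count as in Step 2 it maps to $|H|\,\overline b$. Injectivity on $kGe_H$ gives $\operatorname{Tr}_Q^G(c)=|H|b$, so $b\in(kG)^G_Q$ and $D\le_G Q$. Hence
\[
|\overline D|=|D|\le|Q|=|\overline Q|.
\]

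\textbf{Conclusion.} By Step 2 some $\overline G$-conjugate of $\delta(\overline b)$ lies in $\overline D$, and by Step 3 the orders satisfy $|\delta(\overline b)|\ge|\overline D|$. So that conjugate equals $\overline D$, which means $\delta(\overline b)$ is $\overline G$-conjugate to $\overline{\delta(b)}$, as claimed.

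The main point needing care is the coset-counting identity $\mu_H\circ\operatorname{Tr}_D^G=|H|\operatorname{Tr}_{\overline D}^{\overline G}\circ\mu_H$. It relies on $D\cap H=1$, which holds because $D$ is a $p$-group and $H$ is a $p'$-group. The same identity is used in the lifting argument of Step 3.
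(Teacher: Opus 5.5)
Your proof is correct. The paper does not prove this statement; it cites it from Nagao--Tsushima and uses it only through Remark 3.3.

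Your route is a self-contained argument from Green's relative-trace characterization of defect groups. Step 1 identifies $k\overline{G}$ with the direct factor $kGe_H$. Both inclusions then come from one coset-counting identity,
$\mu_H\circ\operatorname{Tr}_R^G=|H|\operatorname{Tr}_{\overline{R}}^{\overline{G}}\circ\mu_H$ on $(kG)^R$ for $p$-subgroups $R$, which holds because $R\cap H=1$. The reverse inclusion comes from lifting a trace expression for $\overline{b}$ back through the isomorphism $kGe_H\cong k\overline{G}$. This uses that $|H|$ is invertible twice: once to form $e_H$, once to divide out the factor $|H|$.

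Compared with the bare citation, your argument also yields what Remark 3.3 needs.
\begin{itemize}
\item Step 1 gives directly the bijection between blocks of $\overline{G}$ and blocks $b$ of $G$ with $be_H=b$. The paper instead obtains this from $1_{\overline{G}}=\sum\mu_H(b)$ together with [AKO, Part IV, Proposition 1.9(a)].
\item The equality $D\cap H=1$ gives the isomorphism $\overline{D}\cong D$ used there.
\end{itemize}
So your argument covers Theorem 3.2 and Remark 3.3 together.

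Two small points are worth stating explicitly in a write-up:
\begin{itemize}
\item $\dim kGe_H\le|G:H|$, because $ge_H$ depends only on the coset $gH$. Combined with surjectivity, this gives injectivity of $\mu_H$ on $kGe_H$.
\item $QH$ is the full preimage of $\overline{Q}$, because $|QH|=|Q||H|$.
\end{itemize}
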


\begin{remark}
The notation and conditions are the same as in Theorem 3.2, since $1_{G} = \sum b$, where the sum runs over all $p$-blocks $b$ of $G$, we obtain $1_{\overline{G}} = \sum \mu_{H}(b)$. By Theorem 3.2, each term $\mu_{H}(b)$ is either 0 or a $p$-block of $\overline{G}$. Therefore, by \cite[Part IV, Proposition 1.9 (a)]{AKO}, for any $p$-block $\tilde{b}$ of $\overline{G}$, there exists a unique $p$-block $b$ of $G$ such that $\mu_{H}(b) = \tilde{b}$. By Theorem 3.2, and using the fact that $H$ is a $p'$-group, we obtain $\delta(\tilde{b}) = {}_G {\overline{\delta(b)}} \cong \delta(b)H/H \cong \delta(b)$. In other words, the defect group of each $p$-block of $\overline{G}$ is isomorphic to the defect group of the corresponding $p$-block of $G$.
\end{remark}

\begin{definition}\cite[Definition 5.1.1]{GLS}
Let $K$ be a finite quasisimple group. A covering of $K$ is a finite quasisimple group $L$ together with a surjective homomorphism $L \rightarrow K$. This covering is universal if and only if for every covering $M \rightarrow K$ ($M$ quasisimple), there is a unique covering $L\rightarrow M$ such that the following diagram commutes:

\[
\begin{tikzcd}
& M \arrow{dr}{} \\
L \arrow{ur}{} \arrow{rr}{} & & K
\end{tikzcd}
\]

If $L \rightarrow K$ is a (universal) covering, we refer to $L$ as a (universal) covering group of $K$. Furthermore, the kernel $Z$ of the covering is a normal subgroup of $L$ with $L/Z \cong K$. Since $L$ is quasisimple, $Z(L)$ is the unique maximal proper normal subgroup of $L$, so $Z \leq Z(L)$.

\end{definition}

\begin{definition}\cite[Definition 5.1.6]{GLS}
The Schur multiplier of a quasisimple group $K$, denoted by $M(K)$, is the kernel of a universal covering $L \rightarrow K$ of $K$.
\end{definition}

\begin{proposition} \cite[Corollary 5.1.5 (a)]{GLS}
If $K$ is a simple group and $L$ a universal covering group of $K$, then any quasisimple group $G$ with $G/Z(G) \cong K$ is isomorphic to $L/Z$ for some subgroup $Z \leq Z(L)$ (note that when $K$ is a simple group, $Z(L)=M(K)$).
\end{proposition}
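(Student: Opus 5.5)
The plan is to use only the universal property in Definition 3.4 and the fact that a quasisimple group has its centre as its unique maximal proper normal subgroup.

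First, I will check that $G$ is a covering of $K$ in the sense of Definition 3.4. Fix an isomorphism $\theta: G/Z(G) \to K$ and let $\pi: G \to K$ be the composite of the natural projection $G \to G/Z(G)$ with $\theta$. Then $\pi$ is surjective and $G$ is quasisimple, so $(G,\pi)$ is a covering of $K$.

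Second, let $\rho: L \to K$ be the given universal covering. The universal property provides a covering $\sigma: L \to G$ with $\pi\circ\sigma = \rho$. By definition a covering is a surjective homomorphism, so $G \cong L/Z$ where $Z = \ker\sigma$. The subgroup $Z$ is normal in $L$, and it is proper because $G \neq 1$. As remarked after Definition 3.4, $Z(L)$ is the unique maximal proper normal subgroup of the quasisimple group $L$. Hence $Z \leq Z(L)$, which gives the main assertion.

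Third, for the parenthetical claim $Z(L) = M(K)$, I will compare $\ker\rho$ with $Z(L)$.
\begin{enumerate}
\item The kernel $\ker\rho$ is a proper normal subgroup of $L$, since $K \neq 1$. By the same maximality argument as above, $\ker\rho \leq Z(L)$.
\item Conversely, because $\rho$ is surjective, $\rho(Z(L))$ is contained in $Z(K)$. Since $K$ is a nonabelian simple group, $Z(K) = 1$. Therefore $Z(L) \leq \ker\rho$.
\end{enumerate}
Together these give $Z(L) = \ker\rho = M(K)$ by Definition 3.5.

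There is no real obstacle here. The only points needing care are that coverings are surjective by definition (this is what makes $\sigma$ onto and gives $G \cong L/Z$), and that the existence of a universal covering is part of the hypothesis rather than something to be proved.
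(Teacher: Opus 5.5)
Your proof is correct. The paper gives no argument of its own here: it quotes the result from \cite[Corollary 5.1.5 (a)]{GLS}. What you have written is the standard derivation behind that corollary. You apply the universal property of Definition 3.4 to the covering $G \to G/Z(G) \cong K$. You then use that the centre of a quasisimple group is its unique maximal proper normal subgroup, together with $Z(K)=1$, to get $Z(L)=\ker\rho=M(K)$.

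Two small remarks, neither of which is a gap. First, you need not rely on surjectivity being built into the word ``covering''. Since $\pi\circ\sigma=\rho$ is onto, $\sigma(L)Z(G)=G$, so $G/\sigma(L)$ is abelian and $\sigma(L)=G$ because $G$ is perfect. Second, once your third step is in place, $Z=\ker\sigma\le\ker\rho=Z(L)$ gives the containment in the second step directly.
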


\begin{remark}
The notation and conditions are the same as in Proposition 3.6, we have $|G| = |L/Z| = |L/Z(L)|  |Z(L)/Z| = |K|  |M(K)|/|Z| = n|K|$, where $n = |M(K)|/|Z|$ is a divisor of $|M(K)|$.
\end{remark}
 
For alternating groups and symmetric groups, we have the following results.

\begin{proposition}\cite[Theorem 6.2.39]{JK}
Let $p$ be a prime. Then the defect groups of the $p$-blocks of symmetric group $\mathfrak{S}_{n}$ are conjugates of subgroups of form 
\begin{center}
$D^{w}= \psi[C_{p} \wr (\mathfrak{S}_{w})_{p}]$ 
\end{center}
for some $w\in \{1, 2, \dots, [n/p] \}$, where $(\mathfrak{S}_{w})_{p}$ denotes a Sylow $p$-subgroup of symmetric group $\mathfrak{S}_{w}$ and $\psi: C_{p} \wr (\mathfrak{S}_{w})_{p} \rightarrow \mathfrak{S}_{pw}$ is a faithful representation (see \cite[4.1.18]{JK}).
\end{proposition}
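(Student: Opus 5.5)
The plan is the classical argument via the Nakayama conjecture (the Brauer–Robinson theorem), combined with an order count from the hook length formula and a Brauer-pair argument that identifies the defect group up to conjugacy.

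\emph{Labelling and candidate defect group.} The $p$-blocks of $\mathfrak{S}_n$ are parametrised by pairs $(\kappa,w)$. Here $\kappa$ is a $p$-core and $|\kappa|+pw=n$, and the irreducible characters $\chi^\lambda$ in the block are exactly those with $p$-core of $\lambda$ equal to $\kappa$. So $0\le w\le [n/p]$. For $w=0$ the block has defect zero and its defect group is trivial, which is the degenerate case of the statement. For $w\geq 1$, I split $\{1,\dots,n\}$ into a set $\Omega$ of size $pw$ and its complement of size $|\kappa|$. The candidate defect group is $D_0$, a Sylow $p$-subgroup of $\mathfrak{S}_\Omega\cong\mathfrak{S}_{pw}$. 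Partition $\Omega$ into $w$ sets of size $p$. Since $\nu((pw)!)=w+\nu(w!)$, one sees that $D_0$ is conjugate to $\psi[C_p\wr(\mathfrak{S}_w)_p]$: it is the base group $C_p^w$ extended by a Sylow $p$-subgroup of $\mathfrak{S}_w$ permuting the $p$-sets.

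\emph{Order of the defect group.} I will use Brauer's characterisation of the defect: if $b$ has defect $d$, then $\min_{\chi\in\operatorname{Irr}(b)}\nu(\chi(1))=\nu(n!)-d$. By the hook length formula and the standard analysis of hooks of length divisible by $p$ through the $p$-abacus, $\nu(\chi^\lambda(1))\ge \nu(n!)-\nu((pw)!)$ for every $\lambda$ with core $\kappa$. Equality holds for a suitable $\lambda$, for instance the one obtained by adding all $w$ $p$-hooks on a single runner in a controlled way. Hence $|\delta(b)|=p^{\nu((pw)!)}=|D_0|$.

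\emph{Identifying $\delta(b)$ up to conjugacy.} It remains to show that $D_0$ is contained in a defect group, that is, $\mathrm{Br}_{D_0}(b)\neq 0$. Equivalently, $b$ has a Brauer pair $(D_0,e)$. Now $C_{\mathfrak{S}_n}(D_0)=Z(D_0)\times\mathfrak{S}_{n-pw}$, where $\mathfrak{S}_{n-pw}$ acts on the complement of $\Omega$. The block $e_\kappa$ of $k\mathfrak{S}_{n-pw}$ with core $\kappa$ has defect zero. I will show that $e_\kappa$ (tensored with the unique block of $kZ(D_0)$) is a Brauer correspondent of $b$. To do this I compare central characters: for a $p$-regular class sum $K$ of $\mathfrak{S}_{n-pw}$, I need $\omega_{\chi^\lambda}(\mathrm{Br}^{-1}$-lift of $K)\equiv\omega_{\chi^\kappa}(K)$ modulo $p$. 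This uses the Murnaghan–Nakayama rule, which evaluates $\chi^\lambda$ on elements of cycle type $\pi\cup(p^{w})$ by stripping $p$-hooks. By Brauer's first and third main theorems, this gives $\mathrm{Br}_{D_0}(b)e_\kappa\neq0$. So $D_0$ is contained in a conjugate of $\delta(b)$, and the order computation forces equality.

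The main obstacle is this last step. It requires a careful congruence between central characters, essentially the Brauer–Robinson argument that the Murnaghan–Nakayama rule is compatible with removing $p$-hooks. I would first try the shortcut of choosing a $p$-regular defect class directly, namely a $p$-regular element whose centraliser has Sylow subgroup $D_0$, so that $\delta(b)$ is read off as its Sylow $p$-subgroup. I would fall back on the Brauer-pair comparison above if the defect-class choice cannot be controlled.
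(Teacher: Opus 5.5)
The paper gives no proof here; it quotes \cite[Theorem 6.2.39]{JK}, so your argument has to stand on its own. Your first two steps are sound. A Sylow $p$-subgroup $D_0$ of $\mathfrak{S}_\Omega$ is conjugate to $\psi[C_p\wr(\mathfrak{S}_w)_p]$ by the count $\nu((pw)!)=w+\nu(w!)$. The hook-length argument also gives $|\delta(b)|=p^{\nu((pw)!)}$: the number of hooks of $\lambda$ divisible by $p^i$ is the $p^i$-weight, which is at most $[w/p^{i-1}]$. Equality holds if you slide the lowest bead on one runner of the abacus of $\kappa$ down $w$ rows, which produces hooks $p,2p,\dots,wp$.

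The gap is the final step, which is the only one that pins down $\delta(b)$ up to conjugacy. Your target, that $(D_0,1\otimes e_\kappa)$ is a $b$-Brauer pair, is true, but the proposed verification does not work.

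\begin{itemize}
\item Proving $(1\otimes e_\kappa)^{\mathfrak{S}_n}=b$ by central characters means showing $\omega_b(\hat K)=\omega_{1\otimes e_\kappa}(\widehat{K\cap C_{\mathfrak{S}_n}(D_0)})$ for every $p$-regular class $K$ of $\mathfrak{S}_n$.
\item The comparison runs from classes of $\mathfrak{S}_n$ down to the centraliser, not from ``lifts'' of classes of $\mathfrak{S}_{n-pw}$. It also requires $\omega_b(\hat K)=0$ whenever elements of $K$ have fewer than $pw$ fixed points.
\item Elements of $p$-regular classes contain no $p$-cycles, so a Murnaghan--Nakayama evaluation at cycle type $\pi\cup(p^w)$ never enters.
\item What you would actually need is a congruence between $|K_\rho|\chi^\lambda(g_\rho)/\chi^\lambda(1)$ and $|K_{\rho'}|\chi^\kappa(g_{\rho'})/\chi^\kappa(1)$ for $\rho=\rho'\cup(1^{pw})$. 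This is a nontrivial congruence of Brauer--Robinson type, and you do not supply it.
\end{itemize}
The defect-class shortcut is likewise left uncontrolled. So as written you have only $|\delta(b)|=|D_0|$.

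Your Murnaghan--Nakayama computation is the right input, but it belongs with Brauer's second main theorem on $C_{\mathfrak{S}_n}(u)$, not with central characters on $C_{\mathfrak{S}_n}(D_0)$. Here is how to close the gap.

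\begin{itemize}
\item Let $u$ be a product of $w$ disjoint $p$-cycles on $\Omega$. Then $C_{\mathfrak{S}_n}(u)=(C_p\wr\mathfrak{S}_w)\times\mathfrak{S}_{n-pw}$.
\item Take a $p$-regular $s=s_1s_2$ in this centraliser. The $p$-part of $us_1$ is the fixed-point-free element $u$, so every cycle of $us_1$ has length divisible by $p$.
\item Strip those rim hooks first. This never changes the $p$-core, so it must end at $\kappa$, giving $\chi^\lambda(us_1s_2)=c_\lambda(us_1)\chi^\kappa(s_2)$.
\item Up to sign, $c_\lambda(u)$ is the number of ways of removing $w$ successive $p$-hooks from $\lambda$. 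All such removal sequences carry the same sign, so $c_\lambda(u)\neq0$.
\item Since $\chi^\kappa$ has defect zero, every $\varphi\in\mathrm{IBr}(C_{\mathfrak{S}_n}(u))$ with $d^u_{\chi^\lambda\varphi}\neq0$ lies in $e_0\otimes e_\kappa$, and such $\varphi$ exist. Here $e_0$ is the unique block of $C_p\wr\mathfrak{S}_w$, unique because $C(O_p)\le O_p$ in that group.
\item By the second main theorem, $(e_0\otimes e_\kappa)^{\mathfrak{S}_n}=b$. Hence the defect group of $e_0\otimes e_\kappa$, a Sylow subgroup $C_p\wr(\mathfrak{S}_w)_p$ of $C_p\wr\mathfrak{S}_w$, is contained in a conjugate of $\delta(b)$.
\end{itemize}
Your order count then forces equality.
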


\begin{lemma}
Let $p$ be a prime. Then the defect groups of the $p$-blocks of symmetric group $\mathfrak{S}_{n}$ are isomorphic to a Sylow $p$-subgroup of some symmetric group $\mathfrak{S}_{l}$.
\begin{proof}
By Proposition 3.8, it suffices to show that the order of $C_{p} \wr (\mathfrak{S}_{w})_{p}$ equals the order of a Sylow $p$-subgroup of $\mathfrak{S}_{pw}$. Note that $\nu(C_{p} \wr (\mathfrak{S}_{w})_{p}) = \nu(|C_{p}|^{w} |(\mathfrak{S}_{w})_{p}|) = w+\nu(w!) = w+ \sum_{i=1}^{\infty}[w/p^{i}] = w+ \sum_{i=2}^{\infty}[w/p^{i-1}] = \sum_{i=1}^{\infty}[pw/p^{i}] = \nu((pw)!) = \nu(\mathfrak{S}_{pw})$. Setting $l = pw$ completes the proof.
\end{proof}
\end{lemma}

\begin{lemma}
Let $p$ be an odd prime. Then the defect groups of the $p$-blocks of a double cover $2.\mathfrak{S}_{n}$ of $\mathfrak{S}_{n}$ are isomorphic to a Sylow $p$-subgroup of some symmetric group $\mathfrak{S}_{l}$.
\begin{proof}

Let  $Z = \{1, z\}$ be the kernel of the covering $2.\mathfrak{S}_{n}\rightarrow \mathfrak{S}_{n}$, where $z$ is an element of order 2. From the discussion preceding \cite[Theorem 1.2]{Ol}, since $p$ is odd, a $p$-block of $2.\mathfrak{S}_{n}$ either contains only spin characters or contains no spin characters at all. Here spin characters are the faithful irreducilbe characters, which satisfy $\chi(z) = -\chi(1)$. Non-spin irreducilbe characters satisfy $\chi(z) = \chi(1)$; see \cite[1.4]{Cab} for details.

If the block contains only spin characters, by \cite[Theorem 1.3]{Ol}, its defect group is isomorphic to a Sylow $p$-subgroup of some double cover $2.\mathfrak{S}_{l}$. Moreover, since $p$ is odd, a Sylow $p$-subgroup of $2.\mathfrak{S}_{l}$ is isomorphic to a Sylow $p$-subgroup of $\mathfrak{S}_{l}$. Hence, the defect group is isomorphic to a Sylow $p$-subgroup of the symmetric group $\mathfrak{S}_{l}$. 

If instead the block contains no spin characters, then $Z \leq \ker \chi$. It follows from \cite[Chapter 5, Lemma 8.6 (i)]{NT} and Theorem 3.2 that the defect group of this $p$-block is isomorphic to a defect group of a $p$-block of $\mathfrak{S}_{n}$. Hence, by Lemma 3.9, this defect group is isomorphic to a Sylow $p$-subgroup of some symmetric group $\mathfrak{S}_{l}$.
\end{proof}
\end{lemma}

The following results concern groups of Lie type over fields of characteristic $q \neq p$.

\begin{proposition}
\cite[Proposition 4.3]{Se}
Let $G$ be a quasisimple finite group, and denote the quotient $G/Z(G)$ by $\overline{G}$. Suppose $\overline{G} = G(q)$ is a finite group of Lie type, and let $p$ be a prime number $\geq 7$, $(p,q) = 1$. Let $D$ be a $p$-group such that $Z(D)$ is cyclic of order $p$ and $Z(D) \subseteq \left[D, D \right]$. If $D$ is a defect group of a block of $G$, then there are $n, k \in \mathbb{N}$ and a finite group $H$ with 
\begin{center}
$\SL_{n}(q^{k}) \leq H \leq \GL_{n}(q^{k})$ \quad (or $\SU_{n}(q^{k}) \leq H \leq \GU_{n}(q^{k})$) 
\end{center}
such that there is a block $c$ of $H$ with non-abelian defect group $D'$ such that $D'/Z$ is of order $|D/Z(D)|$ for some $Z \leq D'\cap Z(H)$.
\end{proposition}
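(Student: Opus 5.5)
The plan is to reduce from an arbitrary block of the quasisimple group $G$ to a unipotent-type block of a smaller reductive group via Jordan decomposition, then read off the defect group from the Cabanes--Enguehard description and see that the hypotheses on $Z(D)$ force a single $\GL_n$ or $\GU_n$ factor.

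\emph{Step 1 (reduction to $\mathbf{G}^F$).} Realise $G$ as a quotient of $\mathbf{G}^F$ by a central subgroup, where $\mathbf{G}$ is simple simply connected and $F$ is a Steinberg endomorphism. The finitely many exceptional Schur multipliers have order prime to $p$ since $p\geq 7$, so they can be ignored. Because $Z(\mathbf{G}^F)$ has order prime to $p$ apart from the $\SL_n/\SU_n$ cases, Theorem 3.2 and Remark 3.3 let us pass between $G$ and $\mathbf{G}^F$ without changing defect groups up to central $p'$-quotients. The central $p$-part is exactly what the subgroup $Z\leq D'\cap Z(H)$ in the conclusion accounts for.

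\emph{Step 2 (Jordan decomposition).} Let $b$ lie in the series $\mathcal{E}_p(\mathbf{G}^F,s)$ for a semisimple $p'$-element $s$ of the dual group. By Bonnafé--Dat--Rouquier, $b$ is Morita equivalent, with isomorphic defect groups, to a block of $N^F$. Here $N$ is the (possibly disconnected) normaliser in the dual of a Levi $\mathbf{L}$ containing $C^\circ(s)$, and the block of $\mathbf{L}^F$ lies in a quasi-isolated, in fact unipotent, series. Since $p\geq 7$ is good and does not divide the component groups (they have order dividing $|Z(\mathbf{G})|$ or small primes), the defect group is that of a unipotent block of $\mathbf{L}^F$. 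Now $\mathbf{L}^F$ is a central product of groups of Lie type over extension fields $\mathbb{F}_{q^{k}}$ and a torus.

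\emph{Step 3 (Cabanes--Enguehard).} For $p$ good and $p\ne q$, a unipotent block has defect group $D$ with a normal abelian subgroup $Z(\mathbf{M})^F_p$ such that $D/Z(\mathbf{M})^F_p$ embeds in the relative Weyl group $W_{\mathbf{L}^F}(\mathbf{M},\lambda)$ of an $e$-cuspidal pair, $e$ being the order of $q$ mod $p$. Take a product decomposition $D=D_1\times\cdots\times D_t$ along the simple and torus factors of $\mathbf{L}$. Then $Z(D)=\prod Z(D_i)$ is cyclic of order $p$, so exactly one factor $D_i$ is non-trivial. Since $Z(D)\subseteq[D,D]$, that $D_i$ is nonabelian, and it comes from a simple factor of type $A$, ${}^2\!A$, $B$, $C$, $D$ or exceptional over $\mathbb{F}_{q^k}$.

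\emph{Step 4 (reduction to $\GL/\GU$).} For classical types with $p$ odd, the Sylow and Weyl-type structure ($p$-parts of $C_{p^a}\wr \mathfrak{S}_w$ sitting inside $\GL_w(q^{ke})$ or $\GU$) shows that $D_i$ is already a defect group of a unipotent block of some $\GL_m(q^{k'})$ or $\GU_m(q^{k'})$. One realises the torus $Z(\mathbf{M})^F_p$ as a product of cyclotomic tori $(C_{p^a})^w$ and the relative Weyl group part as $(\mathfrak{S}_w)_p$. For exceptional types, $p\geq7$ forces the $p$-part of $W$ to have order at most $p$, or to be trivial except in $E_8$ with $p=7$. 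In those cases one checks directly from the known $e$-split Levi data that the defect group either is abelian or again arises inside an $A$-type Levi. Setting $H$ between $\SL_m$ and $\GL_m$ (or the unitary analogue), $D'$ is then the corresponding defect group. The central quotient $Z$ adjusts for the centre of $\SL_m$ versus $\GL_m$ and for $Z(G)$, so $|D'/Z|=|D/Z(D)|$.

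I expect the main obstacle to be Steps 2--3 together. One must ensure the Morita/Jordan reduction preserves the defect group, including the disconnected-centraliser issue. One must also carry out the exceptional-type case check, especially $E_8$ with $p=7$, where a non-trivial Weyl contribution can occur.
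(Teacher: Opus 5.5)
The paper gives no proof of this proposition; it quotes it from Serwene. So there is no route to compare, and the problems lie in your last step and in the statement itself.

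\textbf{The final step fails, and cannot be repaired.} Your closing sentence, ``the central quotient $Z$ adjusts \dots\ so $|D'/Z|=|D/Z(D)|$'', does not follow from anything before it. In Step 1 you already use $Z\le D'\cap Z(H)$ to absorb the central $p$-part of $\mathbf{G}^F\to G$. The preimage of $Z(D)$ in $D'$ is in general not central in $H$, so $Z$ cannot also remove the factor $|Z(D)|=p$. What your outline produces is $D'/Z\cong D$, i.e.\ $|D'/Z|=|D|$.

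This cannot be improved, because the statement as written is false. Take $p\ge 7$, $q\equiv 1\pmod p$, $q\not\equiv 1\pmod{p^2}$, $G=\mathrm{PSL}_p(q)$, and $D\in\Syl_p(G)$, a defect group of the principal block.
\begin{itemize}
\item Then $D\cong V\rtimes C_p$. Here $V$ is the group of determinant-one diagonal $p$-elements modulo scalars, permuted by a $p$-cycle; it is the uniserial $\mathbb{F}_pC_p$-module of dimension $p-2$.
\item Hence $|D|=p^{p-1}$, $Z(D)=\mathrm{soc}(V)$ has order $p$, and $Z(D)\subseteq\mathrm{rad}(V)=[D,D]$. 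So $D$ satisfies the hypotheses.
\item Now take any admissible $H$ over $Q=q^k$, any non-abelian defect group $D'$ of $H$, and any $Z\le D'\cap Z(H)$. The estimates in the proof of Proposition 3.14 apply: Lemma 3.13, $|D'':D'|\le p^a$, and $|Z|\le|Z(H)|_p\le p^a$.
\item They give $|D'/Z|\ge p^{(p-2)a+1}\ge p^{p-1}>p^{p-2}=|D/Z(D)|$.
\end{itemize}
In fact $D\cong S(p-3,p)$, so this is not cosmetic for the paper. If $|D|$, which is what is actually provable, replaces $|D/Z(D)|$, the computation in Proposition 3.14 only rules out $n\le p-4$.

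\textbf{Secondary gaps, even for the corrected statement.}
\begin{itemize}
\item \emph{Step 2.} Bonnaf\'e--Dat--Rouquier only takes you to a block of $N^F$ lying over $\mathcal{E}_p(\mathbf{L}^F,s)$ with $s$ isolated in $\mathbf{L}^*$. This is a (twisted) unipotent block only when $C^\circ_{\mathbf{G}^*}(s)$ is itself a Levi subgroup. That holds in type $A$ but fails in general for the other types. For example, for $\mathbf{G}=\mathrm{Sp}_{2n}$ an involution $s\in\mathrm{SO}_{2n+1}$ can have $C^\circ(s)$ of type $D_a\times B_b$. There you would need Fong--Srinivasan's description of blocks of classical groups, and Enguehard or Kessar--Malle for exceptional types.
\item \emph{Step 3.} $\mathbf{L}^F$ is in general not the direct product of the finite groups of its simple components and its central torus; for instance $\mathrm{S}(\GL_a(q)\times\GL_b(q))\le\SL_{a+b}(q)$. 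So the defect group need not split as $D_1\times\cdots\times D_t$, and the ``exactly one non-trivial factor'' argument is unjustified. This central-product phenomenon is exactly why the conclusion is only formulated up to a central subgroup $Z$.
\item \emph{Step 4.} $|W(E_7)|_7=7$ as well, so $E_7$ at $p=7$ needs the same treatment you sketch for $E_8$.
\end{itemize}
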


\begin{proposition} \cite[Theorem 3C]{FS} 
Let $p$ be an odd prime with $(p,q) = 1$, let $G = \GL_{n}(q^{k})$ or $\GU_{n}(q^{k})$, and let $R$ be a defect group of a $p$-block of $G$. Then $R$ is conjugate to 
\begin{equation*}
\begin{pmatrix}
R_{0} &        &        &        \\
       & R_{1} &        &        \\
       &        & \ddots &        \\
       &        &        & R_{t}  
\end{pmatrix}
\end{equation*}
where $R_{0}$ is an identity matrix of non-negative degree, and for $i \geq 1$, there exist integers $m_{i}$, $ \alpha_{i}$, $ \beta_{i}$ with $m_{i} \geq 1$, $\alpha_{i} \geq 0$, $\beta_{i} \geq 0$ such that $R_{i} = R^{m_{i}, \alpha_{i}, \beta_{i}} = R^{m_{i}, \alpha_{i}} \wr X_{\beta_{i}}$. Here $R^{m_{i}, \alpha_{i}}$ is a cyclic group of order $p^{a + \alpha_{i}}$, where $a = \nu(q^{ke}-1)$, and $e$ is the order of $q^{k}$ modulo $p$. Finally, $X_{\beta_{i}}$ is a Sylow $p$-subgroup of the group of permutation matrices of degree $p^{\beta_{i}}$. 
\end{proposition}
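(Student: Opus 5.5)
The statement is quoted from \cite[Theorem 3C]{FS}, and I would follow the route of Fong and Srinivasan. The idea is to reduce from an arbitrary block to a Sylow $p$-subgroup of a suitable centralizer, and then compute that Sylow subgroup explicitly. Write $Q=q^{k}$ and let $e$ be the order of $Q$ modulo $p$. I treat $G=\GL_{n}(Q)$ first; the unitary case $\GU_{n}(Q)$ then follows by Ennola duality, replacing $Q$ by $-Q$ throughout. This changes $e$ to the order of $-Q$ modulo $p$ and changes $a$ to $\nu((-Q)^{e}-1)$, and the shape of the answer is the same.

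\textbf{Step 1: reduce to a Sylow subgroup of a centralizer.} By Broué--Michel and Fong--Srinivasan, every $p$-block $b$ of $G$ lies in a union $\mathcal{E}_{p}(G,s)$ of Lusztig series for a semisimple $p'$-element $s$. Decompose $C_{G}(s)\cong\prod_{i}\GL_{n_{i}}(Q^{d_{i}})$ according to the elementary divisors of $s$. Then $b$ is labelled by $s$ together with a tuple of $e_{i}$-cores $\kappa_{i}$, one for each factor, where $e_{i}$ is the order of $Q^{d_{i}}$ modulo $p$. Let $w_{i}$ be the $e_{i}$-weight, so that $n_{i}=|\kappa_{i}|+w_{i}e_{i}$. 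The key claim, which is the heart of \cite{FS}, is that a defect group of $b$ is a Sylow $p$-subgroup of
\[
C=\prod_{i}\GL_{w_{i}e_{i}}(Q^{d_{i}}),
\]
embedded block-diagonally in $G$, with the identity on the remaining coordinates. Those remaining coordinates are exactly the block $R_{0}$ in the statement. To prove this I would take a maximal Brauer pair and use that the Brauer correspondent lives in $C_{G}(D)$. Concretely, I would take $D$ to be a Sylow $p$-subgroup of $C$ and check two things:
\begin{itemize}
\item the character labelled by $(s,\kappa)$ has height zero with respect to $D$;
\item $D$ is maximal with this property, via Brauer's second main theorem applied to $p$-elements $t\in Z(D)$. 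This uses $C_{G}(t)$ being again a product of general linear groups over extension fields, together with induction on $n$.
\end{itemize}
This step is the main obstacle. It needs the full combinatorics of $e$-cores and $e$-quotients, and Jordan decomposition of blocks. I would cite it from \cite{FS} and Broué's work rather than reprove it.

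\textbf{Step 2: compute Sylow subgroups of each factor.} It remains to describe a Sylow $p$-subgroup of $\GL_{we'}(Q')$, where $Q'=Q^{d}$ and $e'$ is the order of $Q'$ modulo $p$. Since $p$ is odd, a Sylow subgroup lies in the natural subgroup $\GL_{w}(Q'^{e'})\leq\GL_{we'}(Q')$. This in turn contains $\GL_{1}(Q'^{e'})\wr\mathfrak{S}_{w}$. By the lifting-the-exponent lemma, the $p$-part of $Q'^{e'}-1=Q^{de'}-1$ has order $p^{a+\alpha}$ for some $\alpha\geq 0$; the extra exponent $\alpha$ comes from the factor $d e'/e$, which is $p^{\alpha}$ times a $p'$-number. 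This cyclic $p$-part is $R^{m,\alpha}$. Writing $w=\sum_{j}c_{j}p^{j}$ in base $p$, a Sylow $p$-subgroup of $\mathfrak{S}_{w}$ is a product of iterated wreath products $X_{j}$ of degree $p^{j}$. Hence the Sylow subgroup of the factor is the direct product of copies of $R^{m,\alpha}\wr X_{j}$, and I would check by order counting that nothing larger occurs.

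\textbf{Step 3: assemble.} Collecting these pieces over all $i$ and all base-$p$ digits gives the block-diagonal form with blocks $R_{i}=R^{m_{i},\alpha_{i}}\wr X_{\beta_{i}}$. The trivial part $R_{0}$ accounts for the coordinates carrying the cores.
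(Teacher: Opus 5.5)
Your proposal is correct and matches the paper, which gives no argument of its own and simply quotes \cite[Theorem 3C]{FS}: like the paper, you rest the substantive step on Fong--Srinivasan, namely that a defect group of the block labelled by $(s,\kappa)$ is a Sylow $p$-subgroup of $\prod_i \GL_{w_ie_i}(Q^{d_i})$, and your Step 2 is the standard order count that puts this Sylow subgroup into the wreath-product form $R^{m,\alpha}\wr X_{\beta}$. The one soft spot is the unitary case, where Ennola duality is a heuristic rather than a proof, so you should cite Fong--Srinivasan's parallel treatment of $\GU_n(Q)$ there as well; note also that for odd $p$ your value $\nu((-Q)^{e'}-1)$, with $e'$ the order of $-Q$ modulo $p$, does coincide with the statement's $a=\nu(Q^{e}-1)$.
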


\begin{lemma}
Let $p$ be an odd prime with $(p,q) = 1$, and let $G = \GL_{n}(q^{k})$ or $\GU_{n}(q^{k})$. Then any non-abelian defect group of a $p$-block of $G$ has order at least $p^{pa+1}$, where $a$ is the same as defined in Proposition 3.12.
\begin{proof}
By Proposition 3.12, $\nu(R^{m, \alpha, \beta}) = \nu(R^{m, \alpha} \wr X_{\beta}) = \nu(|R^{m, \alpha}|^{p^{\beta}}  |X_{\beta}|) = (a + \alpha)p^{\beta}+\nu(p^{\beta}!)$. To obtain a non-abelian defect group $R$ of minimal order, we must take $\alpha = 0$ and $\beta = 1$ (note that if $\beta = 0$, then $R^{m, \alpha, \beta} = R^{m, \alpha}\wr X_{\beta}$ is abelian, so we need $\beta = 1$). In this case, $\nu(R) = \nu(R^{m, 0, 1}) = (a+0)p^{1}+\nu(p!) = pa+1$. 
\end{proof}
\end{lemma}

\begin{proposition}
If $G$ is as defined in Proposition 3.11, then $G$ has no $p$-blocks with defect groups isomorphic to $S(n,p)$, where $p \geq 13$ and $5 \leq n \leq p-3$.
\begin{proof}
Fix integers $p \geq 13$ and $5 \leq n \leq p-3$, and set $S = S(n,p)$. Note that $|S| = p^{n+2}$ and $|Z(S)| = p$. Since $p \geq 13$, apply Proposition 3.11 with $D=S$. Then there exist $m, k \in \mathbb{N}$, a finite group $H$, and a subgroup $D'$ as in Proposition 3.11. Assume first $\SL_{m}(q^{k}) \leq H \leq \GL_{m}(q^{k})$ and let $a=\nu(q^{ke}-1)$ (note that $a \geq 1$). Then  we have
\begin{align*}
|D'| &= |S/Z(S)| |Z| \leq p^{n+1}|Z(H)|_{p} \leq p^{n+1}|C_{\GL_{m}(q^{k})}(\SL_{m}(q^{k}))|_{p} 
\\&= p^{n+1}|q^{k}-1|_{p} \leq p^{n+1}|q^{ke}-1|_{p} = p^{n+1+a}.
\end{align*}
Now there is a block of $k\GL_{m}(q^{k})$ covering $c$, with non-abelian defect group of order at most $p^{n+1+2a}$. Indeed, note that $\SL_{m}(q^{k}) \trianglelefteq H \trianglelefteq \GL_{m}(q^{k})$. If we assume that the defect group is $D''$, then by \cite[Chapter 5, Theorem 5.16 (ii)]{NT}, we have $|D'| = |D'' \cap H| = |D''|  |H|/|D''H|$. It follows that 
\begin{align*}
|D''| &= |D'|  |D''H|/|H| \leq |D'|  | \GL_{m}(q^{k})/ \SL_{m}(q^{k})|_{p} \\&= |D'||q^{k}-1|_{p} \leq |D'|  |q^{ke}-1|_{p} \leq p^{n+1+2a}. 
\end{align*}
However, by Lemma 3.13, any non-abelian defect group of $\GL_{m}(q^{k})$ has order at least $p^{pa+1}$. Thus $p^{pa+1} \leq p^{n+1+2a}$, which contradicts our earlier assumption that $n \leq p-3$.

 A similar argument applies to the case $\SU_{m}(q^{k}) \leq H \leq \GU_{m}(q^{k})$. In fact, assume $\SU_{m}(q^{k}) \leq H \leq \GU_{m}(q^{k})$ (note that $q^{k}$ must be a square number in this case), then $C_{\GU_{m}(q^{k})}(\SU_{m}(q^{k})) = Z(\GU_{m}(q^{k})) = \{ \lambda I_{m} | \lambda \in {\mathbb{F}}^{\times}_{q^{k}}, ({\lambda I_{m}})^{*}(\lambda I_{m}) = I_{m} \} = \{\lambda I_{m} | \lambda \in {\mathbb{F}}^{\times}_{q^{k}}, \lambda^{\sqrt{q^{k}}+1} = 1 \}$. Let $a=\nu(q^{ke}-1)$ (note that $a \geq 1$). Then  we have
\begin{align*}
|D'| &= |S/Z(S)| |Z| \leq p^{n+1}|Z(H)|_{p} \leq p^{n+1}|C_{\GU_{m}(q^{k})}(\SU_{m}(q^{k}))|_{p} 
\\&= p^{n+1}|\sqrt{q^{k}}+1|_{p} \leq p^{n+1}|q^{k}-1|_{p} \leq p^{n+1}|q^{ke}-1|_{p} = p^{n+1+a}.
\end{align*}
Now there is a block of $k\GU_{m}(q^{k})$ covering $c$, with non-abelian defect group of order at most $p^{n+1+2a}$. Indeed, note that $\SU_{m}(q^{k}) \trianglelefteq H \trianglelefteq \GU_{m}(q^{k})$. If we assume that the defect group is $D''$, then by \cite[Chapter 5, Theorem 5.16 (ii)]{NT}, we have $|D'| = |D'' \cap H| = |D''|  |H|/|D''H|$. It follows that 
\begin{align*}
|D''| &= |D'|  |D''H|/|H| \leq |D'|  | \GU_{m}(q^{k})/ \SU_{m}(q^{k})|_{p} \\&= |D'||\sqrt{q^{k}}+1|_{p} \leq |D'|  |q^{k}-1|_{p} \leq |D'|  |q^{ke}-1|_{p} \leq p^{n+1+2a}. 
\end{align*}
However, by Lemma 3.13, any non-abelian defect group of $\GU_{m}(q^{k})$ has order at least $p^{pa+1}$. Thus $p^{pa+1} \leq p^{n+1+2a}$, which contradicts our earlier assumption that $n \leq p-3$.
\end{proof}
\end{proposition}

The following results concern groups of Lie type over fields of characteristic $p$.

\begin{lemma} \cite[Lemma 5.1]{Ke}
Let $p$ be a prime and let $X$ be a finite group having a Tits system $(B, N, R)$. Set $T = B \cap N$ and $W = N/T$. Suppose that the following conditions hold:
\begin{enumerate}[label=(\roman*)]
\item The $BN$ pair for $X$ is split of characteristic $p$, that is $B = U \rtimes T$, where $U$ is a normal $p$-subgroup of $B$ and $T$ is a $p'$-group.
\item The $BN$ pair for $X$ is strongly split, that is for any $J \subset R$, and any $n \in N$ such that the image of $n$ in $W$ is the longest element $\omega_{J}$ of $\langle J \rangle$, $U \cap U^{n}$ is a normal subgroup of $U$.
\item If $Q$ is any $p$-subgroup of $X$ satisfying $O_{p}(N_{X}(Q)) = Q$, then there is an $x\in X$ and an $I \subset R$ such that $N_{X}(^{x}Q) = P_{I} = \bigcup_{w \in \langle I \rangle }BwB$.
\item $W$ is of irreducible type, that is, if we denote by $\Delta = \{\alpha_{r}|r \in R\}$ a basis of $\mathbb{R}^{|R|}$ such that the element $r$ of $R$ acts as the reflection in the hyperplane perpendicular to $\alpha_{r}$ in the natural representation of $W$ as a reflection group in $\mathbb{R}^{|R|}$ (see \cite[Section 2.2]{Ca}), then there is no partition of $\Delta$ into non-empty orthogonal subsets. 
\end{enumerate}
Then every $p$-subgroup of $X$ which is of defect type in $X$ is either the identity group or a Sylow $p$-subgroup of $X$.
\end{lemma}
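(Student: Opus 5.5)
The plan is to combine the standard Brauer-pair characterization of defect groups with the parabolic structure from (iii). Let $Q \neq 1$ be a $p$-subgroup of $X$ of defect type, i.e.\ a defect group of some block $b$ of $kX$, and choose a maximal $b$-Brauer pair $(Q,e)$. Two standard facts will be used. First, a defect group is $p$-radical, $Q = O_p(N_X(Q))$; this follows from $Q = O_p(N_X(Q,e))$ and $[N_X(Q):N_X(Q,e)]$ being prime to $p$. Second, $e$ is a block of $kC_X(Q)$ with defect group $Z(Q)$, and $N_X(Q,e)/QC_X(Q)$ is a $p'$-group. By (iii) we may replace $Q$ by a conjugate and assume $N_X(Q) = P_I$ for some $I \subseteq R$. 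Then $Q = O_p(P_I) = U_I$, the unipotent radical of $P_I$. Here (i) gives the Levi decomposition $P_I = U_I \rtimes L_I$ with $L_I = \langle T, U \cap U^{n}\,\text{-type root data for } I\rangle$. If $I = \emptyset$, then $Q = U$, which is a Sylow $p$-subgroup by (i), and we are done. If $I = R$, then $Q = O_p(X)$, and this case needs separate treatment. So the remaining task is to rule out $\emptyset \neq I \subsetneq R$ (and to handle $I=R$, where $Q=1$ should follow because $O_p(X)\le U\cap U^{w_0}=1$).

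Fix $\emptyset \neq I \subsetneq R$. I will compute $C_X(U_I)$ using (ii). Take $n \in N$ mapping to $\omega_I$, and also the element mapping to $\omega_R\omega_I$. Condition (ii) says that intersections such as $U\cap U^{n}$ are normal in $U$. From this one should get that $U_I$ is exactly such an intersection of $U$ with a $W$-conjugate of $U$, and that $C_U(U_I) \le Z(U_I)$. Combined with the Bruhat decomposition and the fact that $T$ is a $p'$-group, this should yield $C_X(U_I) = Z(U_I) \times K$ with $K \le L_I$ a $p'$-group, or at least $O_p(C_X(U_I)) \le U_I$ together with control of the Sylow $p$-subgroups of $C_X(U_I)$.

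Next I want to show that $U$, or at least $U\cap L_I \neq 1$, lies in $N_X(Q,e)$; this would contradict $p \nmid |N_X(Q,e)/QC_X(Q)|$, since $U\cap L_I$ is a nontrivial $p$-group meeting $QC_X(Q)$ trivially. The irreducibility hypothesis (iv) is what guarantees that $L_I$ does not centralize $U_I$. If it did, then $\Delta$ would split as $I \sqcup (\Delta\setminus I)$ into orthogonal pieces. So $I \neq \emptyset$ forces root subgroups of $L_I$ to act nontrivially on $U_I$, and they are therefore not contained in $QC_X(Q)$. It remains to check that such a root subgroup stabilizes $e$. Since $C_X(Q)$ has the form above, its blocks are governed by blocks of the $p'$-part $K \le T$-type subgroup, and a $p$-group acting on the set of blocks of a group whose relevant part is fixed by $U$ fixes them all. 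Alternatively, use that $e$ is $N_X(Q,e)$-stable, that $|N_X(Q):N_X(Q,e)|$ is prime to $p$, and that a Sylow $p$-subgroup of $N_X(Q)=P_I$ is $U \supsetneq Q$; then some conjugate of $U$ lies in $N_X(Q,e)$ and already gives a $p$-element outside $QC_X(Q)$.

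This last observation is the cleanest route and the one I would try first. Since $N_X(Q,e)$ has $p'$-index in $N_X(Q)=P_I$, it contains a Sylow $p$-subgroup of $P_I$, which is a conjugate of $U$ of order $|U| > |Q|$ when $I\neq\emptyset$. Then $p$ divides $|N_X(Q,e)/QC_X(Q)|$ unless $U\le QC_X(Q)$, i.e.\ unless $U=Q\,C_U(Q)$. So the real content reduces to the inequality $C_U(U_I) \le U_I$ for $\emptyset\ne I\subsetneq R$. \textbf{The main obstacle} is exactly this centralizer statement. I expect to prove it via (ii) and (iv): write $U = U_I \rtimes (U\cap L_I)$ and show that no nontrivial element of $U\cap L_I$, even modified by an element of $U_I$, centralizes $U_I$. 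For this I would use the strongly split property to pass to root-subgroup-like normal pieces, and irreducibility to produce, for each simple root in $I$, a root in $\Delta\setminus I$ that is non-orthogonal to it, so that the corresponding pieces fail to commute.
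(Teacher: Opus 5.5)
The paper contains no proof of this lemma; it is quoted from Kessar's Lemma 5.1. Your proposal therefore has to stand on its own, and it does not: its central step rests on a false ``standard fact''.

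For a maximal $b$-Brauer pair $(Q,e)$ it is \emph{not} true that $[N_X(Q):N_X(Q,e)]$ is prime to $p$. Only $p\nmid[N_X(Q,e):QC_X(Q)]$ holds. Here is a counterexample. Let $t$ of order $p$ act faithfully on $D=C_p\times C_p$, and on a $p'$-group $K$ so that some $\theta\in\mathrm{Irr}(K)$ is not $t$-stable. Put $X=(D\times K)\rtimes\langle t\rangle$. Then $C_X(D)=D\times K$, and the block of $X$ covering $e_\theta$ has maximal Brauer pair $(D,e_\theta)$, yet $[N_X(D):N_X(D,e_\theta)]=p$.

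Radicality of defect groups is still true, via Brauer's first main theorem, so your use of (iii) survives. But the claim that $N_X(Q,e)$ contains a Sylow $p$-subgroup of $P_I$ collapses, and with it your reduction to $C_U(U_I)\le U_I$. That inequality only gives $C_X(U_I)=Z(U_I)\times K$ with $K$ a normal $p'$-subgroup of $P_I$. The block $e$ then corresponds to a character of $K$ that $U$ may well move, and ``a $p$-group acting on the blocks fixes them all'' is not a valid principle.

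What the argument actually needs is $C_X(U_I)\le Z(U_I)Z$ with $Z$ a $p'$-group centralized by $P_I$. Given that, every block of $C_X(U_I)$ is $P_I$-stable, so $N_X(Q,e)=P_I$. Then $p\nmid[P_I:U_IZ]$ forces $U_I=U$, i.e.\ $I=\emptyset$.

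That centralizer statement is the entire content of the lemma in your approach, and you leave it at ``should'' and ``I expect''. The tools you propose for it are not available from (i)--(iv): a Levi complement $L_I$, root subgroups, and a root of $\Delta\setminus I$ non-orthogonal to one in $I$ producing non-commuting pieces. Hypotheses (i)--(iv) provide no root subgroups or commutator relations, only the normality of $U\cap U^n$ for $n$ lying over $\omega_J$.

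Your case $I=R$ has a further gap: it uses $O_p(X)=1$, which does not follow from (i)--(iv) as stated. Take $X=G\times C_p$ with $G$ of Lie type in characteristic $p$, $B=B_G\times C_p$ and $N=N_G$. This satisfies (i)--(iv), yet the Steinberg block of $G$ tensored with $kC_p$ has defect group $C_p$, which is neither trivial nor Sylow. One needs the usual split BN-pair axiom $\bigcap_{n\in N}B^n=T$, which does hold for the groups $\textbf{G}^F$ used in Lemma 3.16.
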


\begin{lemma}
Let $p \geq 5$ be a prime, and let $G$ be a quasisimple finite group such that $G/Z(G)$ is a simple group of Lie type over a field of characteristic $p$. If $D$ is a defect group of a $p$-block of $G$ and $D \neq 1$, then $D$ is a Sylow $p$-subgroup of $G$. Moreover, $Z(G)$ is a $p'$-group and $D$ is isomorphic to a Sylow $p$-subgroup of $G/Z(G)$.
\begin{proof}
Suppose that $\overline{G} = G/Z(G)$ is a simple group of Lie type over a field of characteristic $p$. Then the exceptional part of the Schur multiplier $M(\overline{G})$ of $\overline{G}$ is trivial \cite[Table 6.1.3]{GLS}, and in this case, $M(\overline{G})$ is a $p'$-group (see \cite[Table 6.1.2 and Theorem 6.1.4]{GLS}). Thus, by \cite[Remark 24.19]{MT}, there is an extension $\widetilde{G}$ of $\overline{G}$
\[
1 \longrightarrow M(\overline{G}) \longrightarrow \widetilde{G} \longrightarrow \overline{G} \longrightarrow 1
\]
such that $\widetilde{G} = \textbf{G}^{F}$, where $\textbf{G}$ is a simply connected simple algebraic group and $F$ is a Frobenius endomorphism of $\textbf{G}$. In fact, $\widetilde{G}$ is a universal covering group of $\overline{G}$. Moreover, by Proposition 3.6, there is an extension  
\[
1 \longrightarrow Z \longrightarrow \widetilde{G} \longrightarrow G \longrightarrow 1
\]
where $Z$ satisfies $Z \leq Z(\widetilde{G}) = M(\overline{G})$ and is a $p'$-group. The group $\widetilde{G} = \textbf{G}^{F}$ has a split $BN$ pair of characteristic $p$ (see \cite[Section 1.18]{Ca}), satisfying condition (ii), condition (iii), and condition (iv) of Lemma 3.15 (see the Remark 3.17 below). Therefore, the defect groups of a $p$-block of $\widetilde{G}$ are either trival or Sylow $p$-subgroups of $\widetilde{G}$, and then the same is true for $G$ (see Remark 3.3). Hence, $D$ is a Sylow $p$-subgroup of $G$. By Remark 3.7, we have $|Z(G)|\bigm| |M(\overline{G})|$, so $Z(G)$ is a $p'$-group. Consequently, $D$ is isomorphic to a Sylow $p$-subgroup of $G/Z(G)$.
\end{proof}
\end{lemma}

\begin{remark}
The group $\widetilde{G} = \textbf{G}^{F}$ in Lemma 3.16 satisfies conditions (ii) (iii) and (iv) of Lemma 3.15. 

\begin{proof}
We will verify each condition in turn.

Condition (ii):
From the paragraph immediately preceding \cite[Proposition 2.6.4]{Ca}, it follows that $\textbf{G}^{F}$ is an algebraic group with a split $BN$-pair which satisfies the commutator relations. By \cite[Proposition 2.6.4]{Ca}, $U \cap U^{n} \trianglelefteq  P_{J} = \bigcup_{w \in \langle J \rangle }BwB$, note that $U \subseteq B \subseteq P_{J}$, so $U \cap U^{n} \trianglelefteq U$.

Condition (iii):
Let $X = \textbf{G}^{F} \in \mathcal{L}ie(p)$ (see \cite[Theorem 2.2.6 (f)]{GLS}, \cite[Defination 2.2.2]{GLS} and \cite[Defination 1.10.5]{GLS}). By \cite[Corollary 3.1.5]{GLS}, $N_{X}(Q)$ is a parabolic subgroup of $X$, by \cite[Corollary 1.13.2 (b)]{GLS}, there exists $x\in X$ and an $I \subset R$ such that $N_{X}(^{x}Q) = {^{x}N_{X}(Q)} = P_{I} = \bigcup_{w \in \langle I \rangle }BwB$.

Condition (iv) : We present the proof in steps.

Step 1: determine the Tits system.
In \cite[Theorem 2.3.4 (b)]{GLS}, we take $\bar{K} = \textbf{G}$, $\sigma = F$, and $K^{*} = C_{\textbf{G}}(F) = \textbf{G}^{F} = \widetilde{G}$, then we obtain a Tits system $(B, N, R) = (C_{\textbf{B}}(F), C_{\textbf{N}}(F), S)$ for $\widetilde{G}$, where $\textbf{B}$ and $\textbf{N}$ denote the Borel subgroup and the normalizer of a maximal torus in $\textbf{G}$, respectively, and the set $S$ of generating reflections provided by \cite[Theorem 2.3.4 (b)]{GLS}.

Step 2: identify the Weyl group.
By \cite[Theorem 2.3.4 (a)]{GLS}, $W_{\widetilde{G}} = N/(B \cap N) \cong \widetilde{W} := C_{W_{\textbf{G}}}(\tau)$, where $W_{\textbf{G}}$ denotes the Weyl group of the algebraic group $\textbf{G}$ and $\tau$ is the isometry of the root system induced by $F$ (see \cite[Definition 2.3.1]{GLS}).

Step 3: identify the root system and its basis.
By \cite[Definition 2.3.1]{GLS} and \cite[Proposition 2.3.2 (a)]{GLS}, $\widetilde{W}$ is the Weyl group of the twisted root system $\widetilde{\Sigma}$, acting faithfully on $\widetilde{V} = C_{V}(\tau)$. By \cite[Proposition 2.3.2 (b)]{GLS}, the twisted root system $\widetilde{\Sigma}$ possesses a basis; in the $BN$-pair setting of \cite[Theorem 2.3.4 (b)]{GLS}, this basis is exactly the set $\hat{\Pi}$ used to define the generating reflections $S = \{ r_{\hat{\alpha}} | \hat{\alpha} \in \hat{\Pi} \}$. Consequently, the set $\Delta = \{\alpha_r | r \in R\}$ in Lemma 3.15 corresponds to $\hat{\Pi}$, which is the basis of the twisted root system $\widetilde{\Sigma}$.

Step 4: establish irreducibility.
Since $\mathbf{G}$ is a simple algebraic group, its original root system $\Sigma$ is irreducible (see \cite[Corollary 27.5]{Hum}). 
Under the hypothesis $p \geq 5$, the Suzuki-Ree cases in \cite[Proposition 2.3.4 (d) (3)]{GLS} do not occur. For the remaining cases in \cite[Proposition 2.3.4 (d) (1)-(2)]{GLS}, the classification shows that $\widetilde{\Sigma}$ is one of the irreducible root systems listed in the table $(e.g., C_n, B_n, G_2, F_4, etc.)$. Hence $\widetilde{\Sigma}$ is irreducible, and its basis $\hat{\Pi}$ admits no partition into non-empty orthogonal subsets.

Step 5: conclusion. 
Thus $\widetilde{W}$, as the Weyl group of the irreducible root system $\widetilde{\Sigma}$, has an irreducible reflection representation. Therefore, condition (iv) of Lemma 3.15 holds for $X = \widetilde{G}$. 
\end{proof}
\end{remark}

\begin{proposition}\cite[Proposition 6.3.1]{Cl}
Let $p$ be an odd prime, and let $q = p^{b}$ for some $b$. Let $K$ be a finite simple Chevalley group or a finite simple twisted Lie type group defined over a field of order q and let $U$ be a Sylow $p$-subgroup of $K$. Suppose that $U$ is non-abelian and let $X$ be an elementary abelian subgroup of $U$ with maximal possible rank in $U$. Then $rank(X) > 3$ implies that $|U : X| > p$.
\end{proposition}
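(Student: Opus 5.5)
We argue by contradiction. Suppose $U$ is non-abelian, that $X \leq U$ is elementary abelian of maximal rank with $\mathrm{rank}(X) > 3$, and that $|U:X| \leq p$. Since $U$ is non-abelian, $X \neq U$, so $|U:X| = p$. Hence $X$ is an abelian normal subgroup of index $p$ in $U$, and $U$ is a $p$-group with an abelian maximal subgroup. The standard lemma for such groups gives
\[
|U : Z(U)| = p\,|[U,U]| .
\]
(Map $X \to [U,X]$, $a \mapsto [a,g]$ for some $g \in U \setminus X$. This map is a surjective homomorphism with kernel $C_X(g) = Z(U)$, and $[U,U] = [U,X]$.) The plan is to show that this identity, or even just the inequality $|U:Z(U)| \leq p\,|[U,U]|$, fails for every simple group of Lie type in characteristic $p$ except a few small-rank cases. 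Those cases are then treated by hand.

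\textbf{Step 1: general rank.} We use the Chevalley commutator relations. The group $U$ is the product of root subgroups $X_\alpha$ for $\alpha \in \Sigma^+$, which for untwisted types have order $q$, so $|U| = q^N$ with $N = |\Sigma^+|$. The center $Z(U)$ is, outside the small-characteristic exceptions, the root group of the highest root (or the product of a bounded number of root groups in the twisted and degenerate cases). So $|Z(U)| \leq q^{c}$ with $c \leq 2$. The derived subgroup $[U,U]$ is contained in the product of root groups for non-simple positive roots, so $|[U,U]| \leq q^{N-\ell}$, where $\ell$ is the number of simple roots of the (twisted) root system. The identity then gives $q^{N-c} \leq p\,q^{N-\ell}$, that is, $q^{\ell - c} \leq p$. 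This forces $\ell \leq c$, and hence the Lie rank is at most $2$.

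\textbf{Step 2: small ranks.} The surviving cases are $A_1$ and ${}^2A_2$ (rank one), and $A_2$, $B_2$, $G_2$ together with the twisted rank-two types ${}^2A_3$, ${}^2A_4$, ${}^3D_4$ (${}^2B_2$, ${}^2G_2$, ${}^2F_4$ do not occur since $p$ is odd). We discard $A_1$ because $U$ is abelian. For each of the others we use the known structure of $U$:
\begin{itemize}
\item For $A_2(q)$, $U$ is a Heisenberg group of order $q^3$. Its maximal elementary abelian subgroups have order $q^2$, so either $|U:X| = q > p$, or $q = p$ and $\mathrm{rank}(X) = 2 \leq 3$.
\item For $B_2(q)$ with $q = p$, the group $|U| = p^4$ gives $\mathrm{rank}(X) \leq 3$. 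For $q > p$ we get $|U:X| \geq q$.
\item For ${}^2A_2$, $G_2$, ${}^2A_3$, ${}^2A_4$ and ${}^3D_4$ we compute the largest abelian (in particular elementary abelian) subgroups from the commutator relations. We then check that they have index at least $q^2$ or $q^3$, hence index greater than $p$.
\end{itemize}
In every case, either $\mathrm{rank}(X) \leq 3$ or $|U:X| > p$, which is the claim.

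\textbf{Main obstacle.} The delicate part is Step 1 in small characteristic, in particular $p = 3$ with $G_2$ and the degenerate commutator relations of $B_2$ and $F_4$. There $Z(U)$ can be larger and $[U,U]$ smaller than the generic description predicts. For these cases I would replace the generic bound by a direct inspection. I would bound $|Z(U)|$ using the explicit centers listed in \cite{GLS} (Section 3.3), and use the fact that $[U,U]$ still contains all root groups of height at least $3$. These bounds should still give $|U:Z(U)| > p\,|[U,U]|$ once the Lie rank is at least $3$. The explicit small-rank computations in Step 2 for the twisted groups also need care, but they are routine.
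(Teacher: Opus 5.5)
Your identity is correct: if $U$ is a non-abelian $p$-group with an abelian maximal subgroup, then $|U:Z(U)|=p\,|U'|$, equivalently $|U:U'|=p\,|Z(U)|$. The paper gives no argument of its own here; it only quotes the statement from Clelland's thesis. So yours is an independent route, and once repaired it proves more: $U$ has no abelian subgroup of index $p$ at all unless $K$ is $A_2(p)$, ${}^2A_2(p)$ or $B_2(p)$.

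However, Step 1 does not prove what you claim. From $q^{\ell-c}\le p$ you cannot conclude $\ell\le c$: for $q=p$ the inequality allows $\ell=c+1$. With your bound $c\le 2$, the argument therefore only gives $\ell\le 3$. Lie rank $3$ (e.g.\ $A_3(p)$, $B_3(p)$, $C_3(p)$, ${}^2A_5(p)$, ${}^2D_4(p)$) is then treated neither in Step 1 nor in Step 2. Your `main obstacle' paragraph does not close this gap. The degenerate $B_2$ and $F_4$ relations occur only for $p=2$, and the assertion that the bounds `should still' work in rank at least $3$ is exactly the missing step.

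What you need is the precise fact that for odd $p$ the centre $Z(U)$ is the highest root group, of order $q$, in every case except $G_2(3^b)$. There it has order $q^2$, but that group has rank $2$ and sits in your Step 2 anyway. Then write the argument as $q^{\ell}\le |U:U'|=p\,|Z(U)|=pq$. This form also makes sense for twisted groups, where $|U|=q^N$ is false. It gives $q^{\ell-1}\le p$, hence $\ell\le 2$, with $q=p$ when $\ell=2$. In this form ${}^2A_3$, ${}^2A_4$ and ${}^3D_4$ drop out at once: each has a simple root group of order $q^2$ or $q^3$, so $|U:U'|\ge q^3>pq$.

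Second, ${}^2G_2$ cannot be dismissed on the grounds that $p$ is odd. The Ree groups ${}^2G_2(3^{2m+1})$, $m\ge 1$, are twisted groups in characteristic $3$ and fall under the statement. They are easy to handle, but they must appear in your case list. Here $|U|=q^3$, $|Z(U)|=q$, and $U'=\Phi(U)$ has order $q^2$, so $|U:U'|=q\neq 3\,|Z(U)|$. Equivalently, every element of order $3$ lies in the elementary abelian group $U'$, whose index is $q\ge 27$.

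Two smaller inaccuracies in Step 2:
\begin{itemize}
\item For ${}^2A_2(q)$ the largest abelian subgroups have index exactly $q$, not at least $q^2$. This is harmless, since $q=p$ forces $|U|=p^3$.
\item For $G_2(p)$ the identity genuinely holds: $|U:U'|=p^2=p\,|Z(U)|$ for $p\ge 5$, and $|U:U'|=27=3\cdot 9=3\,|Z(U)|$ for $p=3$. So your direct computation there is indispensable. It works because an abelian maximal subgroup $X$ must contain $U'$. For $p\ge 5$, $U'$ is the product of the four non-simple root groups and is non-abelian. For $p=3$, $X$ would have to lie in $C_U(U')$, which has index $9$ in $U$.
\end{itemize}
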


\begin{lemma}
No finite simple group of Lie type defined over a field of characteristic $p$ has a Sylow $p$-subgroup isomorphic to $S(n,p)$, where $p \geq 13$ and $5 \leq n \leq p-3$.
\begin{proof}
The proof follows the same lines as that of \cite[Theorem 6.3.2]{Cl}.
Let $K$ be a finite simple Chevalley group or a simple twisted group of Lie type defined over a finite field of characteristic $p$. Suppose that $K$ has a Sylow $p$-subgroup isomorphic to $S(n, p)$. The group $S(n, p)$ is non-abelian and contains an elementary abelian subgroup $A(n,p)$ of rank $n + 1 \geq 6$ and index $p$. But by Proposition 3.18, $K$ has no such Sylow $p$-subgroup, a contradiction.
\end{proof}
\end{lemma}

\section{ Proof of Theorem 1.2}

\begin{proof}[Proof of Theorem 1.2]
Fix integers $p \geq 13$ and $5 \leq n \leq p-3$, and set $S = S(n,p)$. Let $\mathcal{F}$ be a reduction simple fusion system on $S$. By Theorem 3.1, if $\mathcal{F}$ is block-realizable, then there exists a fusion system $\mathcal{F}_{0}$ on $S$ and a quasisimple group $G$ with an $\mathcal{F}_{0}$-block. Clearly, $G$ is non-abelian. We will prove that this system $\mathcal{F}_{0}$ cannot be realized by any $p$-block of a quasisimple group $G$. This will yield a contradiction, thereby proving the theorem. 

First, assume that $G/Z(G)$ is an alternating group $\mathfrak{A}_{m}$ for some $m \geq 5$. By \cite[Theorem 5.2.3]{GLS}, The cases $m  =  6$ and $m  =  7$ are immediately excluded by the order of $S$. Then the group $G$ is either the alternating group $\mathfrak{A}_{m}$ or a double cover $2.\mathfrak{A}_{m}$. If $G = \mathfrak{A}_{m}$, note that $S$ is a defect group of a $p$-block of $\mathfrak{A}_{m}$ and that $\mathfrak{A}_{m} \trianglelefteq \mathfrak{S}_{m}$. Hence $S$ is isomorphic to a defect group of a $p$-block of the symmetric group $\mathfrak{S}_{m}$ (see \cite[Chapter 5, Theorem 5.16 (ii)]{NT}). By Lemma 3.9, $S$ is isomorphic to a Sylow $p$-subgroup of some symmetric group $\mathfrak{S}_{l}$. However, $S = S(n,p)$ is non-abelian and contains an elementary abelian $p$-subgroup of index $p$, namely $A = A(n,p)$. By \cite[Proposition 6.2.3]{Cl}, if $S$ is isomorphic to a Sylow $p$-subgroup of some symmetric group $\mathfrak{S}_{l}$, then $|S| \geq p^{p+1}$, which contradicts $|S| = p^{n+2}$ (recall that $n \leq p-3$). If $G = 2.\mathfrak{A}_{m}$, note that $S$ is a defect group of a $p$-block of $2.\mathfrak{A}_{m}$ and that $2.\mathfrak{A}_{m} \trianglelefteq 2.\mathfrak{S}_{m}$(see \cite[2.7.2]{Wil}). Hence $S$ is isomorphic to a defect group of a $p$-block of the group $2.\mathfrak{S}_{m}$ (see \cite[Chapter 5, Theorem 5.16 (ii)]{NT}). By Lemma 3.10, $S$ is isomorphic to a Sylow $p$-subgroup of some symmetric group $\mathfrak{S}_{l}$. and a completely analogous argument, a contradiction is also obtained.

Next, assume that $G/Z(G)$ is a group of  Lie type. By Proposition 3.14, we see that case in cross characteristic is not possible. Therefore, we assume that $G/Z(G)$ is defined over a field of characteristic $p$. By Lemma 3.16, $S$ is isomorphic to a Sylow $p$-subgroup of $G/Z(G)$. However, by Lemma 3.19, $S$ is not isomorphic to any Sylow $p$-subgroup of a finite simple group of Lie type defined over a field of characteristic $p$, which is a contradiction. 

Finally, assume that $G/Z(G)$ is a sporadic simple group. The order of the Schur multiplier of every sporadic simple group is coprime to $p$ \cite[Table 5.3]{GLS}. By Remark 3.7, we compare the exponent of $p$ in $|S|$ with those in the orders of the various sporadic simple groups. This comparison shows that $S$ cannot be a defect group of $G$, which yields a contradiction. 
\end{proof}

The desired results follow immediately from Theorem 1.2.

\begin{proof}[Proof of Theorem 1.3 and Theorem 1.4]
This follows directly from Lemma 2.1 and Theorem 1.2.
\end{proof}

\end{document}